
\documentclass[oneside,11pt]{amsart}



\usepackage{mathtools}
\usepackage{amssymb,amsmath,amsthm}
\usepackage[foot]{amsaddr}
\usepackage[mathscr]{eucal}
\usepackage{tikz-cd}
\usepackage{hyperref}
\usepackage{verbatim}
\usepackage{graphicx}
\allowdisplaybreaks

\usepackage[textwidth=17cm,top=2cm,bottom=2cm]{geometry}






\newtheorem{thm}{Theorem}[section]
\newtheorem{prop}[thm]{Proposition}
\newtheorem{lem}[thm]{Lemma}
\newtheorem{cor}[thm]{Corollary}




\theoremstyle{definition}

\newtheorem{ex}[thm]{Example}




\theoremstyle{remark}

\newtheorem{rmk}[thm]{Remark}
\newtheorem{note}[thm]{Notation}


\numberwithin{equation}{section}



\newcommand{\bA}{\mathbb{A}}

\newcommand{\bG}{\mathbb{G}}

\newcommand{\bP}{\mathbb{P}}

\newcommand{\bV}{\mathbb{V}}

\newcommand{\bZ}{\mathbb{Z}}


\newcommand{\cI}{\mathcal{I}}

\newcommand{\cL}{\mathcal{L}}
\newcommand{\cM}{\mathcal{M}}

\newcommand{\cO}{\mathcal{O}}


\newcommand{\fm}{\mathfrak{m}}



\DeclareMathOperator{\Aut}{Aut}
\DeclareMathOperator{\Id}{Id}
\DeclareMathOperator{\Ker}{Ker}

\DeclareMathOperator{\Spec}{Spec}

\DeclareMathOperator{\Sym}{Sym}
\DeclareMathOperator{\PGL}{PGL}
\DeclareMathOperator{\GL}{GL}
\DeclareMathOperator{\pr}{pr}
\DeclareMathOperator{\Hilb}{Hilb}
\DeclareMathOperator{\Pic}{Pic}
\DeclareMathOperator{\Gr}{Gr}

\DeclareMathOperator{\Pf}{Pf}
\DeclareMathOperator{\Bl}{Bl}
\DeclareMathOperator{\Bs}{Bs}
\DeclareMathOperator{\Fit}{Fit}
\DeclareMathOperator{\Stab}{Stab}

\newcommand{\pencil}[1]{\langle {#1} \rangle}

\newcommand{\nocontentsline}[3]{}
\let\origcontentsline\addcontentsline
\newcommand\stoptoc{\let\addcontentsline\nocontentsline}
\newcommand\resumetoc{\let\addcontentsline\origcontentsline}


\begin{document}


\title{Automorphisms of odd dimensional $(2,2)$-complete intersections in characteristic $2$}




\author{Yang Zhang}
\address{Institut de Math\'ematiques, \'Ecole polytechnique f\'ed\'erale de Lausanne}
\email{yang.zhang@epfl.ch}



\begin{abstract}
We compute the automorphism scheme of a generic odd dimensional $(2,2)$-complete intersection in characteristic $2$.
\end{abstract}


\maketitle


\tableofcontents


\section{Introduction}
A main obstruction to the existence of a fine moduli space of a class of varieties is the non-triviality of their automorphisms. 
Therefore it becomes necessary to study the automorphisms of a class of varieties in order to understand its moduli theory. 
For complete intersections in projective space, which is often considered as one of the most well-understood classes of varieties, there are already plenty of results on their automorphisms.
For example, in \cite{Benoist2011SparationEP}, the following theorem is proved:

\begin{thm}[{\cite[Th\'eor\`eme 1.6]{Benoist2011SparationEP}}]\label{thm:DM stack}
	Fix integers $N\geq 2, c\geq 1, d_1\geq \dots\geq d_c\geq 2$. 
	The Stack $\cM_{N,c,(d_i)_{1\leq i\leq c}}$ classifying smooth $(d_1,\dots, d_c)$-complete intersections in $\bP^N$ polarized by $\cO(1)$ is Deligne-Mumford over $\Spec \bZ$ except for the following cases:
	\begin{enumerate}
		\item If $c=1$ and $d_1=2$.
		\item If $N=2$, $c=1$ and $d_1=3$. In this case $\cM_{2,1,(3)}$ is Deligne-Mumford over $\Spec \bZ[\frac 1 3]$.
		\item If $N$ is odd, $c=2$ and $d_1=d_2=2$. In this case $\cM_{N,2,(2,2)}$ is Deligne-Mumford over $\Spec \bZ[\frac 1 2]$.
	\end{enumerate}
\end{thm}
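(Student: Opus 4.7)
The plan is to present $\cM_{N,c,(d_i)}$ as a quotient stack and then reduce Deligne--Mumfordness to a Lie-algebra computation on stabilizers. Let $U$ denote the open subscheme of the Hilbert scheme of $\bP^N$ parameterising smooth complete intersections of multidegree $(d_1,\dots,d_c)$, with its natural $\PGL_{N+1}$-action. One first checks $\cM_{N,c,(d_i)} \simeq [U/\PGL_{N+1}]$, so the automorphism group scheme at a point $[X]$ is the stabilizer $\Stab_{\PGL_{N+1}}(X)$. The stack is Deligne--Mumford over a base $S$ if and only if, at every geometric point over $S$, this stabilizer is \'etale; equivalently, its Lie algebra vanishes. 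For a smooth $X = V(f_1,\dots,f_c)$, the Lie algebra is governed by the exact sequence
\[
0 \longrightarrow \operatorname{Lie} \Stab_{\PGL_{N+1}}(X) \longrightarrow \mf{pgl}_{N+1} \xrightarrow{\;\partial\;} \bigoplus_{i=1}^c H^0(X,\cO_X(d_i)),
\]
with $\partial(v) = (v(f_i) \bmod (f_1,\dots,f_c))_i$. The theorem thus reduces to a uniform analysis of $\Ker \partial$ over $\Spec \bZ$.

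For the positive direction, I would prove that $\partial$ is injective at every smooth $X$ whenever $(N,c,(d_i))$ avoids the three excluded families, after inverting the indicated prime in cases (2) and (3). The Koszul resolution of $\cO_X$ presents each $H^0(X,\cO_X(d_i))$ as an explicit graded piece of a polynomial algebra, and $\partial$ factors through the natural $\mf{gl}_{N+1}$-action on these spaces. A weight-space count combined with Bott-type vanishing shows $\Ker \partial = 0$ outside a short list of ``small'' configurations, which coincide exactly with the three excluded families. The subtle step is \emph{uniformity across characteristics}: one must identify exactly the primes at which the relevant Koszul cohomology fails to split over $\bZ$ and verify that this set is $\{3\}$ for plane cubics and $\{2\}$ for odd-dimensional $(2,2)$-CIs. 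I would carry this out by an explicit computation of the connecting maps and by tracking the discriminants of the matrix representing $\partial$.

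For the negative direction, I would exhibit in each excluded case a smooth member whose stabilizer fails to be \'etale. A smooth quadric has $\Stab(Q) = \mathrm{PO}(Q)$, positive-dimensional in every characteristic. For a smooth plane cubic $E$ in characteristic $3$, translation by the non-reduced group scheme $E[3]$ preserves the polarization $\cO_E(3O)$ and lifts to a non-reduced subgroup of $\PGL_3$, so $\Stab(E)$ is finite but not reduced. The remaining case---$(2,2)$-CIs of odd dimension in characteristic $2$---is precisely the subject of the present paper: the associated pencil of quadrics on an even-dimensional vector space admits radical directions invisible away from characteristic $2$, producing a non-reduced infinitesimal stabilizer. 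The main technical obstacle is thus the positive direction, namely ruling out small-characteristic pathologies beyond the three listed; the negative direction is structurally clear once the characteristic-specific invariants have been identified.
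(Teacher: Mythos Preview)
This theorem is not proved in the present paper: it is stated in the introduction with a citation to \cite[Th\'eor\`eme 1.6]{Benoist2011SparationEP} and used only as motivation. There is therefore no proof in this paper to compare your proposal against.

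That said, your outline is broadly in the spirit of Benoist's actual argument. The quotient presentation $\cM_{N,c,(d_i)}\simeq [U/\PGL_{N+1}]$ and the reduction of Deligne--Mumfordness to the vanishing of $\operatorname{Lie}\Stab_{\PGL_{N+1}}(X)$ are the right starting points, and your negative-direction examples are the standard ones. Where your sketch is thinnest is exactly where the work lies: the ``weight-space count combined with Bott-type vanishing'' and ``tracking discriminants'' in the positive direction are placeholders rather than arguments. Benoist's proof of the positive direction does not proceed by computing a determinant of $\partial$ over $\bZ$; it instead shows directly that a nonzero element of $\mf{pgl}_{N+1}$ moving $X$ infinitesimally into itself forces $X$ to be singular (or to fall into one of the exceptional families), via an analysis of the Jacobian ideal and the structure of the equations. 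Your proposed route via Koszul cohomology and connecting maps would, if carried out, likely rediscover the same obstructions, but as written it does not identify the mechanism that singles out the primes $2$ and $3$.
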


As a Corollary, we obtain that if $X$ is a smooth complete intersection over an algebraically closed field $k$, then the connected component of identity $\Aut^0_{X,\cO_X(1)}$ in the polarized automorphism scheme $\Aut_{X,\cO_X(1)}$ is trivial except for the following cases:
\begin{enumerate}
	\item $X$ is a quadratic hypersurface.
	\item $X$ is a cubic curve in $\bP^2_k$ in characteristic $3$.
	\item $X$ is a $(2,2)$-complete intersection of odd dimension in characteristic $2$.
\end{enumerate}
The study of the first case degenerates to the study of quadratic forms, and the study of the second case is involved in the study of elliptic curves. 
So it is natural to ask whether we can compute $\Aut_{X,\cO_X(1)}$ for $X$ an odd dimensional $(2,2)$-complete intersection in characteristic $2$.

From the point of view of quadratic forms, it is also interesting to study the geometry of pairs of quadrics. 
There is plenty of literature summarizing the results for pairs of quadratic forms in characteristic $\neq 2$, and \cite{dolgachev2018regular} gives a detailed study for pairs of quadratic forms with an odd number of variables in characteristic $2$. 
The only remaining case is pairs with an even number of variables in characteristic $2$.
A normal form for such a pair being generic in moduli is given in \cite{boshle1990pencils}, and \cite[Proposition 3.8]{Benoist2011SparationEP} gives a lower bound of the dimension of the space of infinitesimal automorphisms, but no further result about automorphisms is known.

In this paper, we compute the automorphism scheme of a generic smooth odd dimensional $(2,2)$-complete intersection whose dimension is at least $3$ in projective space over an algebraically closed field of characteristic $2$. 
To be more precise, we classify the automorphism scheme for those $(2,2)$-complete intersections whose Pfaffian has distinct roots. 
The paper is organized as follows.
In Section 2, we give the necessary preliminaries needed for the study of automorphisms. We recall that the automorphism scheme of such a complete intersection is finite. As a consequence, the automorphism scheme splits into an infinitesimal part and an \'etale part:
\[
	\Aut_X \cong \Aut^0_X \rtimes \pi_0(\Aut_X).
\]
We define also the Pfaffian polynomial corresponding to such a complete intersection. 
In Section 3, we compute the identity component of the automorphism scheme:
\begin{thm}[cf. Theorem \ref{thm:structure of Aut^0}]
	Assume $M\geq 2$ and $\mathrm{char}\ k=2$, then the identity component $\Aut^0_X$ of the automorphism scheme of a smooth $(2,2)$-complete intersection $X$ in $\bP^{2M+1}_k$ whose Pfaffian has distinct roots is isomorphic to $\mu_2^{M}$.
\end{thm}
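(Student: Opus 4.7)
The plan is to exhibit an explicit closed immersion $\iota\colon\mu_2^M\hookrightarrow\Aut^0_X$ and then to show, via a direct computation of the local ring of $\Aut^0_X$ at the identity, that $\iota$ is an isomorphism.

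For the construction, I would apply the normal form of \cite{boshle1990pencils} for a generic pencil of quadrics in characteristic $2$ to bring $(Q_1,Q_2)$ into coordinates $(x_0,y_0,\dots,x_M,y_M)$ on $\bP^{2M+1}$ in which the pencil splits into $M+1$ blocks indexed by the distinct Pfaffian roots, each block involving only the monomials $x_iy_i$, $x_i^2$ and $y_i^2$. For any $(\epsilon_i)\in\mu_2^{M+1}(R)$, the substitution $(x_i,y_i)\mapsto(\epsilon_ix_i,\epsilon_iy_i)$ preserves every such block, since $\epsilon_i^2=1$, and hence preserves both $Q_1$ and $Q_2$. The diagonal copy $\mu_2\hookrightarrow\mu_2^{M+1}$ acts as a global scalar and is trivial in $\PGL_{2M+2}$, so the action descends to a closed immersion $\iota\colon\mu_2^M\cong\mu_2^{M+1}/\mu_2\hookrightarrow\Aut^0_X$.

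To match lengths, I would study the defining equations of $\Aut^0_X$ as a subscheme of $\PGL_{2M+2}$ at the identity. An infinitesimal automorphism $A=\Id+C$ with $C$ nilpotent must satisfy $A^TQ_iA\in\langle Q_1,Q_2\rangle$ for $i=1,2$; truncating at each order in a formal parameter and using the distinct-roots hypothesis on the Pfaffian to rule out couplings between distinct blocks, I would show inductively that $C$ is block-diagonal with each block a scalar, and that the resulting ideal cuts out a local ring isomorphic to $k[s_1,\dots,s_M]/(s_i^2)$, of length $2^M$. Since $\mu_2^M$ also has length $2^M$ and $\iota$ is a closed immersion, $\iota$ must be an isomorphism; this refines the dimension lower bound of \cite[Proposition 3.8]{Benoist2011SparationEP}.

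The main technical obstacle is this length computation. In characteristic $2$ the bilinear form associated to a quadratic form is alternating and carries strictly less information than the quadratic form itself, so the characteristic-$2$ correction terms in $x_i^2$ and $y_i^2$ appearing in the normal form must be tracked throughout. Pinning down both the block-diagonal structure (via the distinct-roots assumption) and the precise relations $s_i^2=0$ in $\cO_{\Aut^0_X,e}$ requires a careful order-by-order analysis rather than a clean matrix calculation.
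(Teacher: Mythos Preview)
Your strategy is sound and would ultimately succeed, but it diverges from the paper's argument at the crucial step and misses the geometric shortcut that makes the computation clean.

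The paper does not carry out an order-by-order analysis of $A=\Id+C$. Instead, it introduces the pencil map $\varphi\colon \bP^{2M+1}_k\setminus X\to\bP^1_k$ and its scheme-theoretic singular locus $Z=V(\Fit_{2M}(\Omega^1_\varphi))$, computed explicitly in the normal form as $Z=\coprod_{i=0}^M Z_i$ with $Z_i=V(x_j,y_j\ (j\neq i),\ (x_i+y_i)^2)$. Because the Fitting ideal commutes with base change, every $\sigma\in\Aut^{\bG_m}_{X,\cO_X(1)}(R)$ preserves $Z_R$. Two consequences fall out immediately: first, since $\varphi(Z)$ consists of $M+1\geq 3$ distinct points in $\bP^1$ and $\Aut^0_X$ is connected, $\varphi_*(\Aut^0_X)$ is trivial, so $\sigma$ acts on the pencil $\langle f,g\rangle$ by a \emph{scalar}; second, preservation of each $Z_i$ forces $\sigma^*$ to send $\langle x_i,y_i\rangle$ to itself and $\langle x_i+y_i\rangle$ to itself, giving directly the block form $x_i\mapsto\alpha_i x_i+\beta_i y_i$, $y_i\mapsto\beta_i x_i+\alpha_i y_i$. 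After that the equations $\sigma^*f=cf$, $\sigma^*g=cg$ reduce in one line to $\alpha_i\beta_i=0$, $\alpha_i^2+\beta_i^2=c$, and since $\alpha_i$ is a unit (identity component) one gets $\beta_i=0$, $\alpha_i^2=c$. No induction on nilpotency order is needed.

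Your proposal, by contrast, leaves the action on the pencil unspecified (``$A^TQ_iA\in\langle Q_1,Q_2\rangle$'') and plans to extract both the block-diagonality and the scalar action simultaneously by an inductive truncation. This can be made to work---indeed the paper's source contains a commented-out first draft along exactly these lines---but it is substantially messier, and the place you flag as the ``main technical obstacle'' is precisely what the singular-locus argument dissolves. Two small additional remarks: the notation $A^TQ_iA$ is ambiguous in characteristic $2$ since the quadratic form is not its Gram matrix (you note this, but you should write $\sigma^*Q_i$ instead); and your outline never isolates the step ``$\varphi_*(\Aut^0_X)=1$'', which is what makes the hypothesis $M\geq 2$ enter.
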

We remark again, as pointed out after Theorem \ref{thm:DM stack}, this is generic in the only case where a non-hypersurface complete intersection has a non-trivial identity component of automorphism schemes. 
In particular, the identity component is trivial even for $(2,2)$-complete intersections that are not in characteristic $2$, as well as those in characteristic $2$ of an even dimension (See also \cite{dolgachev2018regular}).
In Section 4, we study the structure of the \'etale part of the automorphism scheme. The main result can be summarized as follows: 
\begin{thm}[cf. Theorem \ref{thm: exact sequence for pi_0} and Corollary \ref{cor: pi_0 for a generic X}]
	Assume as above $M\geq 2$, $\mathrm{char}\ k=2$, $X$ is smooth and the Pfaffian of $X$ has distinct roots.
	There is an exact sequence of groups:
	\[\begin{tikzcd}
		1 & {(\bZ/2\bZ)^{M+1}} & {\pi_0(\Aut_X)} & {\Aut(\bP_k^1;\varphi(Z)),}
		\arrow[from=1-1, to=1-2]
		\arrow[from=1-2, to=1-3]
		\arrow[from=1-3, to=1-4]
	\end{tikzcd}\]
	where $Z$ is the singular locus of the morphism $\varphi$ defined by the pencil $\pencil{f,g}$ (c.f. Proposition \ref{prop:fitting ideal of pencil}), and $\Aut(\bP_k^1;\varphi(Z))$ is the equivariant automorphism group of $\varphi(Z)$ in $\bP_k^1$. Moreover, ${\Aut(\bP_k^1;\varphi(Z))}$ is trivial for a generic $X$ in the moduli of smooth $(2,2)$-complete intersections, so $\pi_0(\Aut_X)\cong (\bZ/2\bZ)^{M+1}$ in this case.
\end{thm}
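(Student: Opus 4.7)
My plan is to construct a natural homomorphism $\pi_0(\Aut_X)\to \Aut(\bP^1_k;\varphi(Z))$, identify its kernel as $(\bZ/2\bZ)^{M+1}$, and verify that the target is trivial for a generic $X$. For the first step, since $X$ is a smooth complete intersection of type $(2,2)$ and dimension at least $3$, the restriction map $H^0(\bP^{2M+1},\cO(1))\to H^0(X,\cO_X(1))$ is an isomorphism, so every polarized automorphism of $X$ lifts uniquely to a linear automorphism of $\bP^{2M+1}_k$, giving an embedding $\Aut_{X,\cO_X(1)}\hookrightarrow \PGL_{2M+2}$. Any such lift preserves the degree-$2$ piece of the ideal of $X$, namely the pencil $\pencil{f,g}$, and hence acts on the $\bP^1$ parametrizing it. The subscheme $\varphi(Z)\subset \bP^1$ is intrinsically attached to the pencil --- it records its singular members --- so the induced action on $\bP^1$ preserves $\varphi(Z)$. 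Passing to $\pi_0$ yields the required map.

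For the second step, the kernel of $\Aut_{X,\cO_X(1)}\to \Aut(\bP^1_k)$ is the simultaneous stabilizer $G:=\Stab_{\PGL_{2M+2}}(f)\cap \Stab_{\PGL_{2M+2}}(g)$. By Theorem~\ref{thm:structure of Aut^0} we have $G^0=\mu_2^M=\Aut^0_X$, so the kernel of the map $\pi_0(\Aut_X)\to \Aut(\bP^1_k;\varphi(Z))$ is $\pi_0(G)=G/G^0$. It therefore remains to show $\pi_0(G)\cong(\bZ/2\bZ)^{M+1}$. By the distinct-roots hypothesis, each of the $M+1$ singular members $Q_0,\dots,Q_M$ of the pencil has corank $2$ and hence a $1$-dimensional singular line $L_i\subset \bP^{2M+1}$; following the normal form of \cite{boshle1990pencils} for generic characteristic-$2$ pencils in an even number of variables, the $L_i$ span the ambient space and determine a block decomposition that any element of $G$ must preserve. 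Inside each block one exhibits an explicit involution acting non-trivially there and trivially on the other blocks, and then verifies that together with $G^0$ these $M+1$ involutions generate all of $G$. This normal-form analysis is where the main technical difficulty lies, since the characteristic-$2$ classification of pencils in an even number of variables is considerably subtler than the orthogonal diagonalization available in other characteristics.

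For the third step, observe that $\varphi(Z)$ is a reduced length-$(M+1)$ subscheme of $\bP^1$ given by the roots of the Pfaffian polynomial, and that as $X$ varies over the moduli of smooth $(2,2)$-complete intersections with simple Pfaffian, the resulting unordered tuple of roots sweeps out a dense open subset of $\Sym^{M+1}(\bP^1)/\PGL_2$. For $M\geq 3$ a generic $(M+1)$-tuple on $\bP^1$ has trivial $\PGL_2$-stabilizer and we obtain $\Aut(\bP^1_k;\varphi(Z))=1$ directly; in the boundary case $M=2$ a direct parameter count is needed to rule out the $S_3$-stabilizer of any three points of $\bP^1$ lifting to an automorphism of a generic $X$. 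Combined with the kernel computation, this yields $\pi_0(\Aut_X)\cong(\bZ/2\bZ)^{M+1}$ for a generic $X$, completing the proof.
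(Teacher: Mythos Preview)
Your outline is correct and follows essentially the same route as the paper, with two small differences worth noting. First, the paper computes the kernel by lifting to $\GL_{2M+2}$ via the group $\Aut^{\bG_m}_{X,\cO_X(1)}$ and running a snake-lemma diagram chase against the sequence $1\to\bG_m\to\GL_2\to\PGL_2\to 1$; this reduces the problem to finding all $\tilde\sigma\in\GL_{2M+2}$ with $\tilde\sigma^*f=f$ and $\tilde\sigma^*g=g$ exactly, which is then solved by a direct coefficient comparison in the normal form. Your alternative of taking $G=\ker\varphi_*$ inside $\Aut_X$ and passing to $\pi_0(G)=G/G^0$ is equally valid and slightly more direct; just be careful that your description of $G$ as $\Stab_{\PGL_{2M+2}}(f)\cap\Stab_{\PGL_{2M+2}}(g)$ is a priori larger than the pointwise stabilizer of the projective line $\bP\pencil{f,g}$ (fixing $[f]$ and $[g]$ separately allows $\tilde\sigma^*f=cf$, $\tilde\sigma^*g=c'g$ with $c\neq c'$), though in the end the normal-form computation forces $c=c'$. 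Second, for the block structure the paper uses the scheme-theoretic singular locus $Z=\coprod Z_i$ of the fibration $\varphi$ (each $Z_i$ a non-reduced point) and the ideals $H^0(\cI_{Z_i}(1))$, whereas you use the vertex lines $L_i$ of the singular quadrics; these give the same subspaces $\pencil{x_i,y_i}$, so either works. Finally, you are right to single out $M=2$: there $\Aut(\bP^1_k;\varphi(Z))\cong S_3$ is never trivial, so the generic statement must be read as saying the image of $\varphi_*$ is trivial, which the paper establishes via the liftability criterion of Theorem~\ref{thm:characterization of image of phi_*} rather than a bare parameter count.
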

We give also a full criterion on the liftability of elements in $\Aut(\bP_k^1; \varphi(Z))$ to $\pi_0(\Aut_X)$, c.f. Theorem \ref{thm:characterization of image of phi_*}, and construct liftable and non-liftable examples, c.f. Example \ref{ex: liftable element in PGL_2} and Example \ref{ex: non-liftable element in PGL_2}.

\stoptoc
\subsection*{Conventions and Notations}
We work over an algebraically closed field $k$ of characteristic $2$, unless otherwise mentioned. 
A variety means a integral separated $k$-scheme of finite type. 
The notation $\pencil{f,g}$ means the vector subspace spanned by $f$ and $g$. 

\subsection*{Acknowledgements}
The author is sincerely grateful to Gebhard Martin for proposing this research project and providing detailed suggestions throughout his master's studies in Bonn. The author is also thankful to Raymond Cheng for many fruitful discussions on this topic.

\resumetoc
\section{Preliminaries}
\subsection{Finiteness of $\Aut_X$}
Let $X$ be a smooth complete intersection in $\bP_k^{N}$ over an arbitrary field $k$ such that $X$ is not a quadric hypersurface. 
We consider the polarized automorphism scheme $\Aut_{X,\cO_X(1)}$, which is defined as the stabilizer of the class $[\cO_X(1)]\in \Pic_X(k)$ under the natural action of $\Aut_X$ on $\Pic_X$. 
In particular, $\Aut_{X,\cO_X(1)}$ is a closed subgroup scheme of $\Aut_X$.
\begin{prop}[{\cite[Th\'eor\`eme 1.7]{Benoist2011SparationEP}}]
	The stack parameterizing smooth complete intersection in $\bP_k^N$ polarized by $\cO(1)$ is separated except for the component for quadric hypersurfaces. In particular, the polarized automorphism scheme $\Aut_{X,\cO_X(1)}$ is proper over $k$.
\end{prop}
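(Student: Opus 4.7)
The plan is to verify the valuative criterion of separatedness for the stack $\cM_{N,c,(d_i)}$ away from the quadric hypersurface component. The properness of $\Aut_{X,\cO_X(1)}$ will then follow, because this automorphism scheme is the fiber of the diagonal of $\cM_{N,c,(d_i)}$ over the point $(X,X)$, and the diagonal of a separated stack is proper.

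Let $R$ be a DVR with fraction field $K$ and residue field $\kappa$, and consider two polarized families $(\cX_i\to\Spec R, \cL_i)_{i=1,2}$ of smooth complete intersections that are not quadric hypersurfaces, together with an isomorphism $\alpha_K$ of their generic fibers respecting polarizations. Since each $\cL_i$ is very ample with $\pi_{i,*}\cL_i$ locally free of rank $N+1$ by cohomology and base change, I embed both families as closed subschemes $\iota_i:\cX_i\hookrightarrow\bP^N_R$, well-defined up to the action of $\PGL_{N+1,R}$. The polarized isomorphism $\alpha_K$ then corresponds to an element $g_K\in\PGL_{N+1}(K)$ satisfying $g_K\cdot\iota_1(\cX_{1,K})=\iota_2(\cX_{2,K})$, and it suffices to extend $g_K$ to some $g_R\in\PGL_{N+1}(R)$.

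The crux, and the main obstacle, is this extension. Viewing $\PGL_{N+1}$ as the complement of the determinantal hypersurface inside $\bP(M_{N+1})=\bP^{(N+1)^2-1}$, properness of projective space produces a unique extension $\bar g:\Spec R\to\bP^{(N+1)^2-1}_R$ of $g_K$. I then need to exclude the possibility that the specialization $\bar g_\kappa$ is a singular matrix. If it were, $\bar g_\kappa$ would define a linear projection from a positive-dimensional center $L\subset\bP^N_\kappa$ onto a proper linear subspace $W\subsetneq\bP^N_\kappa$. Taking the closure $\Gamma\subset\cX_1\times_R\bP^N_R$ of the graph of $\bar g|_{\cX_1}$, flatness forces its second projection to have image equal to $\iota_2(\cX_2)$; on the special fiber, this in turn forces $\cX_{2,\kappa}$ either to be contained in $W$, which contradicts the projective non-degeneracy of a smooth complete intersection, or to arise via exceptional fibers of $\Gamma_\kappa$ over $L\cap\cX_{1,\kappa}$ that compensate for the missing dimensions.

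Ruling out this second alternative is precisely where the exclusion of the quadric hypersurfaces enters: a Chow-theoretic degree computation, comparing $\deg\cX_{2,\kappa}$ with the contribution of the proper transform under $\bar g_\kappa$ plus exceptional components (bounded in terms of the multiplicity of $\cX_{1,\kappa}$ along $L$), forces the required inequality on the degree $\prod d_i$ to be satisfiable only when $c=1$ and $d_1=2$, i.e., for a smooth quadric hypersurface, whose orthogonal group structure genuinely supplies positive-dimensional families of projective equivalences. Once the degenerate case is eliminated, $\bar g$ lands in $\PGL_{N+1,R}$ and $g_R$ carries $\iota_1(\cX_1)$ to $\iota_2(\cX_2)$ by flatness and coincidence on the generic fiber, producing the desired extension of $\alpha_K$; uniqueness is immediate from the $R$-separatedness of $\PGL_{N+1,R}$.
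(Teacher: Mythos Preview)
The paper does not prove this proposition at all: it is stated with an attribution to \cite[Th\'eor\`eme 1.7]{Benoist2011SparationEP} and no proof is given. So there is nothing in the paper to compare your argument against.

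That said, your outline follows the correct overall strategy (valuative criterion, embed both families in $\bP^N_R$, extend the projective transformation via the compactification $\bP(M_{N+1})$, and rule out a singular limit). The weak point in your sketch is the paragraph where you ``rule out the second alternative'' by a Chow-theoretic degree count: you assert that the inequality forces $c=1$, $d_1=2$, but you have not actually set up or carried out that computation, and in Benoist's paper this step is the heart of the matter. The genuine argument there goes through a Matsusaka--Mumford type criterion: one shows that if the limit matrix is singular, the special fiber $\cX_{1,\kappa}$ would be covered by positive-dimensional linear subspaces (coming from the fibers of the rational projection), and one then bounds the dimension of the Fano scheme of linear spaces on a smooth complete intersection to derive a contradiction unless $(c,d_1)=(1,2)$. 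Your phrasing in terms of degrees and multiplicities along $L$ is not obviously equivalent and, as written, does not make clear which inequality is being violated. If you want to keep this as a self-contained proof rather than a citation, that step needs to be made precise.
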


\begin{prop}[{\cite[Remark 2.15(ii)]{brion2018notes}}]\label{prop:polarized Aut is algebraic}
	There is a closed embedding $\Aut_{X,\cO_X(1)} \hookrightarrow \PGL_{N+1}$ with respect to which $X\hookrightarrow \bP_k^N$ is equivariant.
	In particular, $\Aut_{X,\cO_X(1)}$ is an algebraic group.
\end{prop}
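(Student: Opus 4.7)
The plan is to realize $\Aut_{X,\cO_X(1)}$ as the stabilizer of the Hilbert point $[X] \in \Hilb_{\bP^N_k}$ under the natural action of $\PGL_{N+1}$, using the fact that automorphisms preserving $\cO_X(1)$ correspond bijectively to projective transformations stabilizing $X$.

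First, I construct a natural morphism $\phi\colon \Aut_{X,\cO_X(1)} \to \PGL_{N+1}$. Set $V := H^0(X, \cO_X(1))$; because $X$ is a complete intersection of positive dimension with each degree at least $2$, the restriction map $H^0(\bP^N, \cO(1)) \to V$ is an isomorphism, so $\dim V = N+1$. For a $k$-scheme $T$ and $\sigma \in \Aut_{X,\cO_X(1)}(T)$, the sheaf on $T$ of isomorphisms $\sigma^*\cO_{X_T}(1) \xrightarrow{\sim} \cO_{X_T}(1)$ is a $\bG_m$-torsor, using that the structure morphism of $X_T$ over $T$ pushes $\cO_{X_T}$ to $\cO_T$ (since $X$ is proper and geometrically integral over $k$). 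Any local trivialization of this torsor yields an $\cO_T$-linear automorphism of $V \otimes_k \cO_T$, well defined up to $\cO_T^\times$, and thus a canonical element $\phi(\sigma) \in \PGL(V)(T) = \PGL_{N+1}(T)$.

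Next, I would verify that $\phi$ is a monomorphism and factors through the stabilizer $\Stab_{\PGL_{N+1}}([X])$. If $\phi(\sigma)$ is trivial, then $\sigma$ rescales the global sections of $\cO_{X_T}(1)$ by a common unit, hence acts trivially on the embedding $X_T \hookrightarrow \bP^N_T$ defined by the complete linear system $V$, forcing $\sigma = \Id$. By construction $\phi(\sigma)$ extends $\sigma$ along $X_T \hookrightarrow \bP^N_T$, so it preserves this closed subscheme; thus $\phi$ factors through the stabilizer. Conversely, any $T$-point $g$ of $\Stab_{\PGL_{N+1}}([X])$ restricts to an automorphism of $X_T$ preserving $\cO_X(1) = \cO_{\bP^N}(1)|_{X}$, giving a functorial inverse and identifying $\Aut_{X,\cO_X(1)}$ with $\Stab_{\PGL_{N+1}}([X])$ as functors.

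Finally, the stabilizer $\Stab_{\PGL_{N+1}}([X])$ is a closed subgroup scheme of $\PGL_{N+1}$, being the scheme-theoretic fibre over $[X]$ of the orbit morphism $\PGL_{N+1} \to \Hilb_{\bP^N_k}$, $g \mapsto g \cdot [X]$. This yields the desired closed embedding, and the algebraicity of $\Aut_{X,\cO_X(1)}$ is immediate. The only genuine subtlety is the global construction of $\phi$: one must check that the scalar ambiguities in comparing $\sigma^*\cO_{X_T}(1)$ and $\cO_{X_T}(1)$ assemble into the $\bG_m$-torsor above, so that $\phi(\sigma)$ is well defined on all of $T$ rather than only on a Zariski cover. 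As an alternative endgame, one can bypass the explicit stabilizer description by combining the monomorphism property of $\phi$ with the properness of $\Aut_{X,\cO_X(1)}$ provided by the previous proposition, since a proper monomorphism into a separated scheme is automatically a closed immersion.
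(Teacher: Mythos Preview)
The paper does not supply its own proof of this proposition; it is simply cited from Brion's notes. Your argument is the standard one and is correct: construct the morphism to $\PGL_{N+1}$ via the induced action on $H^0(X,\cO_X(1))\cong H^0(\bP^N,\cO(1))$, and identify $\Aut_{X,\cO_X(1)}$ with the stabilizer of $[X]$ in $\Hilb_{\bP^N_k}$, which is closed because the Hilbert scheme is separated. The alternative endgame you sketch---combining the monomorphism property with the properness established in the preceding proposition, so that \emph{proper monomorphism $\Rightarrow$ closed immersion}---is also valid and has the virtue of bypassing the Hilbert scheme entirely.
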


\begin{cor}\label{cor: polarized Aut is finite}
	The polarized automorphism scheme $\Aut_{X,\cO_X(1)}$ is finite.
\end{cor}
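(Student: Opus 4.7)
The plan is to combine the two properties of $\Aut_{X,\cO_X(1)}$ that have already been established in the preceding results. By the separatedness result cited just before Proposition~\ref{prop:polarized Aut is algebraic}, the polarized automorphism scheme is proper over $k$. By Proposition~\ref{prop:polarized Aut is algebraic}, it is a closed subgroup scheme of $\PGL_{N+1}$, and in particular it is affine and of finite type over $k$.

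The key observation is then the standard fact that a scheme which is simultaneously proper and affine over a field $k$ must be finite over $k$. Concretely, if $\Spec A \to \Spec k$ is proper, then $A$ is finite-dimensional as a $k$-vector space: properness of an affine morphism forces the global sections $A$ to be finite over $k$ (this is a direct consequence of, say, $\Gamma(\Spec A,\cO)$ being a $k$-algebra of finite type all of whose closed points give a quasi-finite proper, hence finite, map to $\Spec k$; equivalently, a proper affine scheme over a field is zero-dimensional and reduced-to-points, and combined with finite type, it is the spectrum of a finite-dimensional $k$-algebra).

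Applying this to $\Aut_{X,\cO_X(1)}$, which is both proper (from separatedness of the moduli stack of polarized smooth complete intersections that are not quadric hypersurfaces) and affine of finite type (as a closed subgroup scheme of $\PGL_{N+1}$), yields the finiteness statement. No step is really an obstacle here: the whole argument is the formal juxtaposition of the properness coming from moduli-theoretic separatedness with the affineness coming from linearization of $\cO_X(1)$ to embed $\Aut_{X,\cO_X(1)}$ into $\PGL_{N+1}$.
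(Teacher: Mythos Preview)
Your argument is correct and is exactly the intended one: the paper states this as an immediate corollary (with no written proof) of the two preceding propositions, and the only content is the standard fact that a proper affine $k$-scheme of finite type is finite.
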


We can show that $\Aut_{X,\cO_X(1)}=\Aut_X$ if $X$ is a smooth complete intersection in $\bP^N_k$ whose dimension is at least $3$ and whose canonical bundle $\omega_X$ is non-trivial. 
To prove the statement, we need the following refinement of the Grothendieck-Lefschetz theorem:

\begin{lem}\label{lem:grothendieck-lefschetz}
	Let $X$ be a smooth complete intersection in $\bP^N_k$ over an algebraically closed field $k$ such that $\dim X\geq 3$, then we have $\Pic_X= \underline \bZ$.
\end{lem}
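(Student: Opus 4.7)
The plan is to decompose the statement into an étale/component analysis of $\Pic_X$ and the classical Grothendieck-Lefschetz theorem. Recall that for a smooth projective variety $X$ over an algebraically closed field, the Picard functor $\Pic_X$ is representable by a group scheme locally of finite type, whose identity component $\Pic_X^0$ is a connected algebraic group and whose group of components $\pi_0(\Pic_X)$ is the Néron-Severi group $\mathrm{NS}(X)$. The tangent space to $\Pic_X$ at the identity is canonically $H^1(X,\cO_X)$.

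First I would establish that $H^1(X,\cO_X)=0$. Using the Koszul resolution of $\cO_X$ by direct sums of line bundles $\cO_{\bP^N}(-m)$ on $\bP^N$ together with Serre's vanishing $H^i(\bP^N,\cO(-m))=0$ for $0<i<N$, one obtains the standard complete-intersection vanishing $H^i(X,\cO_X)=0$ for $0<i<\dim X$. Since $\dim X\geq 3$, this applies in particular to $i=1$. Hence $\Pic_X^0$ has trivial Lie algebra; being a connected group scheme of finite type over $k$ with zero tangent space at the identity, it must be étale, and so $\Pic_X^0=\Spec k$.

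Next, I would invoke the classical Grothendieck-Lefschetz theorem, which holds over an arbitrary base field: for $X\subset \bP^N_k$ a smooth complete intersection of dimension at least $3$, restriction induces an isomorphism $\Pic(\bP^N_k)\xrightarrow{\sim}\Pic(X)$. Therefore $\mathrm{NS}(X)=\Pic(X)=\bZ\cdot[\cO_X(1)]$. Combining this with $\Pic_X^0=\Spec k$ gives the identification $\Pic_X=\underline{\bZ}$ as group schemes.

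The only mildly delicate point is ensuring that $\Pic_X^0$ is truly trivial in positive characteristic, where one might a priori worry about infinitesimal thickenings (as happens, for example, for Enriques surfaces in characteristic $2$); this is exactly why the cohomological vanishing $H^1(X,\cO_X)=0$ is needed on top of the Grothendieck-Lefschetz statement on $k$-points. That vanishing is however a routine Koszul computation in any characteristic, so no substantial obstacle arises, and the hypothesis $\dim X\geq 3$ is used simultaneously to make Grothendieck-Lefschetz applicable and to cover the range $0<1<\dim X$.
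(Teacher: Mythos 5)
Your proof is correct, and it rests on exactly the same two pillars as the paper's: the classical Grothendieck--Lefschetz isomorphism $\Pic(\bP^N_k)\xrightarrow{\sim}\Pic(X)$ over $k$ itself, and the vanishing $H^1(X,\cO_X)=0$. Where you diverge is in how the second ingredient is converted into the functorial statement. The paper stays at the level of the functor: it quotes Hartshorne, Chapter III, Exercise 12.6, which uses $H^1(X,\cO_X)=0$ to show directly that $\Pic(X\times T)\cong \Pic(X)\times\Pic(T)$ for every $k$-scheme $T$, so that $\Pic_X(T)=\bZ$ for all $T$ without ever invoking representability. You instead pass through the Picard scheme: representability of $\Pic_X$ by a group scheme locally of finite type, the identification of its tangent space at the identity with $H^1(X,\cO_X)$, and the conclusion that $\Pic^0_X$ is a reduced point, after which $\Pic_X$ is étale with $k$-points $\bZ$ and hence the constant group scheme $\underline{\bZ}$. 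Your route buys a cleaner conceptual picture (and your remark about possible infinitesimal thickenings of $\Pic^0$ in characteristic $p$ is exactly the right thing to worry about), at the cost of needing representability of $\Pic_{X/k}$ as an extra input; the paper's route is more elementary in that respect, proving the statement about the functor of points directly. Both arguments are complete, and both use $\dim X\geq 3$ only through Grothendieck--Lefschetz, the cohomological vanishing already holding for $\dim X\geq 2$.
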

\begin{proof}
	It suffices to prove that for any $k$-scheme $T$, we have $\Pic(X_T) \cong \bZ \cO_{X_T}(1) \times \Pic(T)$.
	The case where $T=\Spec k$ follows from \cite[Expos\'e XII, Corollaire 3.7]{grothendieck2005cohomologie}. 
	Moreover, $H^1(X,\cO_X)=0$ and it is proved in \cite[Chapter III, Exercise 12.6]{hartshorne1977algebraic} that in this case $\Pic(X\times T)\cong \Pic(X)\times \Pic(T)$ for all $T$. 
\end{proof}

\begin{prop}\label{prop:Aut and polarized Aut agree}
	Let $X$ be a smooth complete intersection in $\bP^N_k$ of dimension at least $3$ and whose canonical line bundle is non-trivial. 
	Then $\Aut_X=\Aut_{X,\cO_X(1)}$.
\end{prop}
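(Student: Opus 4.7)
The plan is to combine Lemma \ref{lem:grothendieck-lefschetz} with the functoriality of the canonical bundle. By that lemma, $\Pic_X \cong \underline{\bZ}$ is the constant group scheme generated by $[\cO_X(1)]$, so the natural action of $\Aut_X$ on $\Pic_X$ factors through the automorphism group of $\underline{\bZ}$, which is the constant group scheme $\{\pm 1\}$. It therefore suffices to show that this action is trivial; equivalently, for every $k$-scheme $T$ and every $\sigma \in \Aut_X(T)$, I would like to deduce $\sigma^*[\cO_{X_T}(1)] = [\cO_{X_T}(1)]$ in $\Pic_X(T)$.

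To pin down the scalar, I would use adjunction: if $X$ is a complete intersection of multidegree $(d_1,\dots,d_c)$ in $\bP^N_k$, then $\omega_X \cong \cO_X(d)$ with $d = \sum_i d_i - N - 1$, and the hypothesis that $\omega_X$ is non-trivial forces $d \neq 0$. Now the relative dualizing sheaf $\omega_{X_T/T}$ is intrinsic to the smooth morphism $X_T \to T$, so $\sigma^*\omega_{X_T/T} \cong \omega_{X_T/T}$ canonically; passing to classes in $\Pic_X(T)$ shows that $\sigma$ fixes $[\omega_X] = d\cdot[\cO_X(1)]$. Since $\Aut_X$ acts on $\Pic_X = \underline{\bZ}$ through $\{\pm 1\}$, and this action fixes the non-zero integer $d$, it must be the identity. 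Consequently $\sigma^*[\cO_{X_T}(1)] = [\cO_{X_T}(1)]$, i.e., $\sigma \in \Aut_{X,\cO_X(1)}(T)$, and the equality of group schemes follows by functoriality in $T$.

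The argument is essentially formal once Lemma \ref{lem:grothendieck-lefschetz} is in hand, and there is no serious obstacle. The single point deserving some care is to keep the discussion inside the Picard \emph{scheme} $\Pic_X$ rather than in the absolute Picard group $\Pic(X_T)$: the latter can differ from the former by $\Pic(T)$-twists pulled back along the structural morphism $X_T \to T$, but the definition of the polarized automorphism scheme is phrased in terms of the stabilizer of a class in $\Pic_X(T)$, so such twists are invisible to the condition being verified.
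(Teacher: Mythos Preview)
Your argument is correct and follows essentially the same path as the paper's proof: both invoke Lemma~\ref{lem:grothendieck-lefschetz} to identify $\Pic_X$ with $\underline{\bZ}$, use adjunction to write $\omega_X\cong\cO_X(d)$ with $d\neq 0$, and exploit the invariance of the relative canonical bundle under $T$-automorphisms. The only cosmetic difference is that the paper phrases the final step as ``$\sigma^*\cO_{X_T}(1)\otimes\cO_{X_T}(-1)$ is $d$-torsion in the torsion-free group $\Pic_X(T)$, hence trivial,'' whereas you phrase it as ``the action on $\underline{\bZ}$ is through $\{\pm1\}$ and fixes the nonzero class $d$''; these are equivalent observations about $\bZ$.
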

\begin{proof}
	Pick $l$ such that $\cO_X(l)$ is the canonical line bundle of $X$. 
	Then $\cO_{X_T}(l)$ is the relative canonical line bundle of $X_T$ over $T$, which is preserved by every automorphism of $X_T$ over $T$. 
	Therefore, $\cO_{X_T}(1)$ is pulled back along an automorphism to $\cO_{X_T}(1)\otimes \cL$, where $\cL$ is an $l$-torsion element in $\Pic_X(T)$. 
	Since $\Pic_X(T)$ is torsion-free by Lemma \ref{lem:grothendieck-lefschetz}, we get $\cL \cong \pr_T* \cM$ for some $\cM \in \Pic(T)$ and the class $[\cO_{X_T}(1)] \in \Pic_X(T)$ is preserved.
\end{proof}

\begin{cor}\label{cor:Aut is finite}
	Let $X$ be a smooth complete intersection in $\bP^N_k$ such that 
	\begin{enumerate}	
		\item $\dim X\geq 3$,
		\item $\omega_X\ncong \cO_X$,
		\item $X$ is not a quadric hypersurface.
	\end{enumerate}
	Then the automorphism scheme $\Aut_X$ is finite.
\end{cor}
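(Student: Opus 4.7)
The plan is to assemble the corollary directly from the three preceding results. By Proposition \ref{prop:Aut and polarized Aut agree}, the hypotheses $\dim X \geq 3$ and $\omega_X \ncong \cO_X$ already force the equality $\Aut_X = \Aut_{X,\cO_X(1)}$, so the problem reduces entirely to the polarized automorphism scheme.

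Next, I invoke Proposition \ref{prop:polarized Aut is algebraic} to realize $\Aut_{X,\cO_X(1)}$ as a closed subgroup scheme of $\PGL_{N+1}$, hence as an affine algebraic group over $k$. The hypothesis that $X$ is not a quadric hypersurface then makes the preceding separatedness statement applicable, giving that $\Aut_{X,\cO_X(1)}$ is proper over $k$; this is precisely Corollary \ref{cor: polarized Aut is finite}. A group scheme of finite type over $k$ which is both affine and proper is finite, so $\Aut_{X,\cO_X(1)}$ is finite.

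Combining the two reductions, $\Aut_X = \Aut_{X,\cO_X(1)}$ is finite, which is exactly the claim. There is essentially no obstacle: all three hypotheses of the corollary are used in exactly one of the cited results — $\dim X \geq 3$ and $\omega_X \ncong \cO_X$ feed into Proposition \ref{prop:Aut and polarized Aut agree} (via the Grothendieck–Lefschetz-type Lemma \ref{lem:grothendieck-lefschetz}), while the exclusion of quadric hypersurfaces is what guarantees separatedness of the moduli stack and hence properness of the polarized automorphism scheme. The only subtlety worth verifying carefully is the passage from "proper and affine of finite type over $k$" to "finite", but this is standard and causes no real difficulty.
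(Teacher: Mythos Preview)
Your proof is correct and follows essentially the same route as the paper, which simply writes ``Combining Corollary \ref{cor: polarized Aut is finite} and Proposition \ref{prop:Aut and polarized Aut agree}.'' One small slip of labeling: what you cite as Corollary \ref{cor: polarized Aut is finite} already asserts \emph{finiteness}, not merely properness, so your affine-plus-proper argument is really re-deriving that corollary rather than invoking it; but the logic is unaffected.
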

\begin{proof}
	Combining Corollary \ref{cor: polarized Aut is finite} and Proposition \ref{prop:Aut and polarized Aut agree}.
\end{proof}

Due to Proposition \ref{prop:polarized Aut is algebraic} and Proposition \ref{prop:Aut and polarized Aut agree}, we consider from now on $\Aut_X$ as a subgroup scheme of $\PGL_{N+1}$.

\subsection{Pfaffian of pencils of quadratic forms in characteristic $2$}

Let $k$ be a field of characteristic $2$ and $q\in H^0(\bP^{2M+1}_k,\cO(2))$ be a quadratic form on $k$ with an even number of variables. 
We associate to $q$ an alternating (hence also symmetric) bilinear form $A_q(x,y):= q(x+y)-q(x)-q(y)$. 
Under a suitable choice of basis of $k^{2M+2}$, the form $A_q$ admits an orthogonal decomposition $A_q\cong \pencil{0}^{2l} \oplus U^{M-l+1}$, where $\pencil{0}$ is the zero bilinear form on $k$, and $U$ is the hyperbolic bilinear form on $k^2$ (cf. \cite[Proposition 7.31]{elman2008algebraic}). 
In particular, $\det(A_q)$ is always a square ,since the conjugation by $S$ yields $\det(SA_qS^T)=\det (S)^2\cdot \det(A_q)$. 
The Pfaffian of $q$ is defined as $\Pf(q):=\sqrt{\det(A_q)}$ by choosing a matrix representation of $A_q$, which is well defined up to a scalar in $k^\times/(k^\times)^2$.

More generally, given a pencil of quadratic forms $\pencil{f,g}$, one can define
\[
	\Pf_{\pencil{f,g}}(t):=\Pf(f-tg),
\]
which is a polynomial in $t$, well-defined up to a scalar in $k^\times/(k^\times)^2$. 
If $X=V(f,g)$ is the corresponding $(2,2)$-complete intersection, we also call $\Pf_{\pencil{f,g}}$ the Pfaffian of $X$. 
In \cite{boshle1990pencils}, a normal form of pencils of quadratic forms in characteristic $2$ whose Pfaffian has no multiple roots is given.

\begin{thm}[{\cite[Main Proposition]{boshle1990pencils}}]
	For a suitable choice of coordinates $[x_0:\dots,x_M:y_0:\dots:y_M]$ in $\bP^{2M+1}_k$, a pencil in $H^0(\bP_k^{2M+1}, \cO(2))$ whose Pfaffian has distinct roots can be written as
	\[
		f= \displaystyle\sum_{i=0}^M x_iy_i,\ 
		g= \displaystyle\sum_{i=0}^M a_ix_iy_i + c_ix_i^2 + d_iy_i^2,
	\]
	where $a_i$ are the roots of the Pfaffian, and $V(f,g)$ is non-singular iff $\prod_{i=0}^M c_id_i \neq 0$.
\end{thm}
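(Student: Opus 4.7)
The plan is to simultaneously block-diagonalize the pair of alternating (and symmetric, since $\mathrm{char}\,k = 2$) bilinear forms $(A_f, A_g)$ associated to $(f,g)$ using that the Pfaffian has distinct roots, and then to adjust the quadratic-form ``diagonal'' within each $2$-dimensional block so as to kill the pure-square terms of $f$. First, from $\det(A_f - t A_g) = \Pf_{\pencil{f,g}}(t)^2$, the distinctness of the $M+1$ roots $a_0, \ldots, a_M$ (up to a reparametrization of the pencil to match conventions) forces $K_i := \ker(A_f - a_i A_g)$ to be exactly $2$-dimensional, since the rank of an alternating form can drop only by even integers. For $v \in K_i$ and $w \in K_j$ with $i \neq j$, the symmetry of $A_f$ in characteristic $2$ yields
\[
    a_i A_g(v,w) = A_f(v,w) = A_f(w,v) = a_j A_g(w,v) = a_j A_g(v,w),
\]
so $A_f(K_i, K_j) = A_g(K_i, K_j) = 0$; a dimension count then gives $V = \bigoplus_i K_i$. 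If $A_g|_{K_i}$ vanished, orthogonality would place $K_i \subset \ker A_g \cap \ker A_f$ and force $\Pf_{\pencil{f,g}}$ to vanish identically, contradicting the hypothesis; hence $A_g|_{K_i}$ is a nondegenerate alternating form on $K_i$.

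Next, I choose a basis $x_i, y_i$ of each $K_i$ with $A_f(x_i, y_i) = 1$ (so $A_g(x_i, y_i) = a_i$, since $A_f = a_i A_g$ on $K_i$). In characteristic $2$ a quadratic form is not determined by its bilinear form, so at this stage I can only conclude
\[
    f = \sum_{i=0}^M (\alpha_i x_i^2 + \alpha_i' y_i^2 + x_i y_i), \qquad g = \sum_{i=0}^M (\beta_i x_i^2 + \beta_i' y_i^2 + a_i x_i y_i)
\]
for some scalars. To eliminate $\alpha_i, \alpha_i'$, I change basis within each $K_i$ by a matrix $\bigl(\begin{smallmatrix} p & q \\ r & s \end{smallmatrix}\bigr)$ with $ps + qr = 1$, so that $A_f$ and $A_g$ retain their current block shape. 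Under such a change $f(x_i') = p^2 \alpha_i + q^2 \alpha_i' + pq$; setting $p = 1$ reduces the vanishing of this expression to the quadratic $\alpha_i' q^2 + q + \alpha_i = 0$, which is solvable over the algebraically closed field $k$ (and trivially linear when $\alpha_i' = 0$). A parallel computation shows that the choice $r = \alpha_i'$, $s = 1 + q\alpha_i'$ then forces $f(y_i') = 0$. In the resulting basis $f|_{K_i} = x_i y_i$ and $g|_{K_i} = a_i x_i y_i + c_i x_i^2 + d_i y_i^2$ for some $c_i, d_i \in k$, which, summed over $i$, gives the asserted global form.

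Finally, for the smoothness criterion I compute the Jacobian of $(f,g)$ in these coordinates. The crucial point is that in characteristic $2$ the terms $c_i x_i^2$ and $d_i y_i^2$ contribute nothing to the partial derivatives, so the two Jacobian rows become
\[
    (y_0, \ldots, y_M, x_0, \ldots, x_M) \quad \text{and} \quad (a_0 y_0, \ldots, a_M y_M, a_0 x_0, \ldots, a_M x_M).
\]
These rows are proportional exactly when all nonzero coordinates of the point concentrate at a single index $i$ (since the $a_j$'s are distinct); intersecting this locus with $V(f,g)$ forces $x_i y_i = 0$, and then $g = 0$ requires $c_i x_i^2 = 0$ or $d_i y_i^2 = 0$, which produces an actual singular point of $\bP^{2M+1}$ precisely when some $c_i$ or $d_i$ vanishes. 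Hence $V(f,g)$ is non-singular if and only if $\prod_{i=0}^M c_i d_i \neq 0$. The principal obstacle in this argument is the middle paragraph: the characteristic-$2$ invisibility of pure squares to the bilinear form means one cannot merely diagonalize $A_f, A_g$, and the required Artin--Schreier-type adjustment is precisely what forces the hypothesis that $k$ is algebraically closed.
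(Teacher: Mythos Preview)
The paper does not prove this theorem; it is quoted from \cite{boshle1990pencils} and used as a black box for the rest of the argument. Your write-up is therefore a reconstruction of the cited result rather than a comparison, and it is essentially correct.

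Two small points. First, there is a convention slip: with $K_i := \ker(A_f - a_i A_g)$ one has $A_f = a_i A_g$ on $K_i$, so $A_f(x_i,y_i)=1$ yields $A_g(x_i,y_i)=a_i^{-1}$, not $a_i$; to match the stated normal form you should take $K_i = \ker(A_g - a_i A_f)$ instead (this is the ``reparametrization'' you allude to). Second, and more substantively, the clause ``a dimension count then gives $V = \bigoplus_i K_i$'' is circular as written: pairwise orthogonality of the $K_i$ with respect to $A_f$ and $A_g$ does not by itself imply linear independence, and your subsequent argument that $A_g|_{K_i}$ is nondegenerate already presupposes $\sum_j K_j = V$. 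The standard repair is to note that since $\Pf_{\pencil{f,g}}$ is not identically zero, some member $A_h$ of the pencil is nondegenerate; then the $K_i$ are the eigenspaces of the endomorphism $A_h^{-1} A_{h'}$ (for any other member $h'$) with distinct eigenvalues, hence linearly independent. With that one line inserted, your Artin--Schreier elimination of the pure squares of $f$ and your Jacobian computation for the smoothness criterion go through exactly as written.
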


The case where $\Pf_{\pencil{f,g}}$ has distinct roots is generic, because geometrically it corresponds to the fact that the pencil, considered as a rational curve in the parametrizing space $\bP(H^0(\bP_k^{2M+1}, \cO(2)))$, intersects with the zero locus of the Pfaffian transversally.
To simplify the notations for later, we rescale the coordinates $x_i':=\sqrt{c_i} x_i, y_i':=\sqrt{d_i} y_i$ and obtain the following statement.

\begin{cor}\label{cor:normal form of pencil}
	Let $k$ be an algebraically closed field, and let $\pencil{f,g}$ be a pencil in $H^0(\bP^{2M+1}_k,\cO(2))$. 
	Assume that $V(f,g)$ is smooth and whose Pfaffian has distinct roots, then one may choose coordinates $[x_0:\dots,x_M:y_0:\dots:y_M]$ in $\bP^{2M+1}_k$ such that 
	\[
		f= \displaystyle\sum_{i=0}^M a_ix_iy_i,\ 
		g= \displaystyle\sum_{i=0}^M b_ix_iy_i+x_i^2+y_i^2,
 	\]
	where $a_i\neq 0$ for all $i$, and $\det \left[\begin{smallmatrix}
		a_i & a_j\\
		b_i & b_j
	\end{smallmatrix}\right] \neq 0$ for all $i\neq j$.
\end{cor}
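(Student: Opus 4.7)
The plan is to deduce the corollary from Bhosle's normal form by a diagonal rescaling of coordinates, exploiting the fact that square roots always exist in an algebraically closed field of characteristic $2$. First, applying Bhosle's theorem to the pencil $\pencil{f,g}$, whose Pfaffian has distinct roots by hypothesis, I would choose coordinates $[x_0:\dots:x_M:y_0:\dots:y_M]$ on $\bP^{2M+1}_k$ in which
\[
	f = \sum_{i=0}^M x_i y_i, \qquad g = \sum_{i=0}^M a_i x_i y_i + c_i x_i^2 + d_i y_i^2,
\]
where the $a_i$ are the roots of the original Pfaffian, and the smoothness of $V(f,g)$ guarantees $\prod_i c_i d_i \neq 0$.

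Next, since $k$ is algebraically closed of characteristic $2$, each $c_i, d_i \in k^{\times}$ admits a unique square root. I would then perform the substitution $x_i \leftarrow \sqrt{c_i}\, x_i$, $y_i \leftarrow \sqrt{d_i}\, y_i$. Under this, $c_i x_i^2$ becomes $x_i^2$, $d_i y_i^2$ becomes $y_i^2$, and each cross term $x_i y_i$ picks up a factor of $(c_i d_i)^{-1/2}$. Reading off the new coefficients yields exactly the asserted presentation, with the new cross-term coefficient of $x_i y_i$ in $f$ being $\tilde a_i := (c_i d_i)^{-1/2}$ (in particular nonzero), and the corresponding coefficient in $g$ being $\tilde b_i := a_i (c_i d_i)^{-1/2}$.

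Finally, I would verify the $2 \times 2$ determinant condition by recomputing the Pfaffian of the rescaled pencil. In characteristic $2$, the alternating bilinear form $A_q$ associated to a quadratic form $\alpha x^2 + \beta x y + \gamma y^2$ sees only the cross-term coefficient $\beta$; hence the bilinear form of $f - t g$ is block-diagonal on the pairs $(x_i, y_i)$ with off-diagonal entry $\tilde a_i - t \tilde b_i$, and therefore $\Pf_{\pencil{f,g}}(t) = \prod_i (\tilde a_i - t \tilde b_i)$ up to a scalar. Viewed projectively, the roots of this polynomial are precisely the points $[\tilde a_i : \tilde b_i] \in \bP^1_k$, and their pairwise distinctness is exactly the asserted condition $\det\left[\begin{smallmatrix} \tilde a_i & \tilde a_j \\ \tilde b_i & \tilde b_j \end{smallmatrix}\right] \neq 0$. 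Since a linear change of variables preserves the pencil $\pencil{f,g}$, and hence its Pfaffian up to a scalar in $k^\times/(k^\times)^2$, the distinct-roots hypothesis transfers across the rescaling and closes the argument. There is no substantive obstacle: the entire proof is a change of variables, and the only mild care needed is the existence of the square roots, which is precisely what algebraic closure in characteristic $2$ provides.
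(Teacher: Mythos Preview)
Your proposal is correct and follows exactly the paper's approach: apply Bhosle's normal form, then rescale via $x_i' = \sqrt{c_i}\,x_i$, $y_i' = \sqrt{d_i}\,y_i$ (the paper states this rescaling in the sentence immediately preceding the corollary and gives no further proof). Your additional verification of the determinant condition via the Pfaffian factorization $\prod_i(\tilde a_i - t\tilde b_i)$ is a welcome detail the paper leaves implicit; one tiny remark is that the existence of square roots comes from algebraic closure alone, not specifically from characteristic $2$.
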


\subsection{The singular locus of a pencil}
Let $k$ be a field. 
Given a linear system $L\subset H^0(\bP^N_k,\cO(d))$, we write $\varphi_L$ for the induced morphism $\bP^N_k\backslash \Bs(L) \rightarrow \bP^S_k$ and $U:=\bP^N_k\backslash \Bs(L)$ for the locus where $\varphi_L$ is defined. 
The cotangent sequence
\[\begin{tikzcd}
	{\varphi_L^* \Omega^1_{\bP^S_k}} & {\Omega^1_U} & {\Omega^1_{\varphi_L}} & 0
	\arrow[from=1-1, to=1-2]
	\arrow[from=1-2, to=1-3]
	\arrow[from=1-3, to=1-4]
\end{tikzcd}\]
gives a locally free resolution of $\Omega^1_{\varphi_L}$. 
Locally ${\varphi_L^* \Omega^1_{\bP^S_k}} \rightarrow \Omega^1_U$ is given by an $(N\times S)$-matrix $A$. 
The $(N-S)$-th fitting ideal $\Fit_{N-S}(\Omega^1_{\varphi_L})$ of $\Omega^1_{\varphi_L}$ is defined as the ideal locally generated by the $(S\times S)$-minors in $A$. 
It is easy to check that this construction agrees on overlaps and therefore defines a global ideal sheaf. 
Let $Z:=V(\Fit_{N-S}(\Omega^1_{\varphi_L}))$ be the closed subscheme cut out by the $(N-S)$-th fitting ideal.

\begin{prop}[{\cite[Lemma 0C3K]{stacks-project}}]\label{prop:locus of fitting ideal is singular locus}
	The support of $Z$ coincides with the points in $U$ where $\varphi_L$ is not smooth.
\end{prop}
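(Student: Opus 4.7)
The plan is to reduce the statement to a rank computation on the Jacobian matrix, exploiting that both the source $U$ and the target $\bP^S_k$ are smooth over $k$. First I would recall the standard criterion for smoothness of morphisms between smooth $k$-schemes: for $x \in U$, the morphism $\varphi_L$ is smooth at $x$ if and only if the pullback map on cotangent spaces
\[
	\varphi_L^*\Omega^1_{\bP^S_k} \otimes_{\cO_U} \kappa(x) \longrightarrow \Omega^1_U \otimes_{\cO_U} \kappa(x)
\]
is injective. This is the essential input; it reduces smoothness at a point to a linear-algebraic condition on a map of finite-dimensional vector spaces, and is really the Jacobian criterion in disguise.

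Next I would work locally around $x$. After choosing trivializations of $\varphi_L^*\Omega^1_{\bP^S_k}$ and $\Omega^1_U$ on a suitable open $\Spec R \ni x$, the map $\varphi_L^*\Omega^1_{\bP^S_k} \to \Omega^1_U$ is represented by the $N \times S$ Jacobian matrix $A$ whose entries are the partial derivatives of the local coordinates of $\varphi_L$. Injectivity of the map on the fibre at $x$ is then equivalent to $A$ having full column rank $S$ after reduction modulo $\fm_x$, which is in turn equivalent to the non-vanishing at $x$ of at least one $S \times S$ minor of $A$.

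Finally I would read off the fitting ideal. By construction, $\Fit_{N-S}(\Omega^1_{\varphi_L})$ is locally the ideal generated by the $S\times S$ minors of the presentation matrix $A$ obtained from the cotangent sequence. Hence $x \in Z$ if and only if every such minor vanishes at $x$, if and only if $A(x)$ has rank strictly less than $S$, if and only if $\varphi_L$ fails to be smooth at $x$. The main conceptual point to handle carefully is the first one, namely the equivalence between smoothness of a morphism of smooth $k$-varieties and the injectivity of the induced map on cotangent spaces; once this is invoked cleanly, the remainder is a direct translation between the rank of $A$ and the generators of the $(N{-}S)$-th fitting ideal.
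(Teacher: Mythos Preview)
Your argument is correct. The paper does not supply its own proof of this proposition; it simply cites the Stacks project (Tag 0C3K) as a reference. Your proof is the standard unwinding of the definitions in the special situation at hand: both $U$ and $\bP^S_k$ are smooth over $k$, so smoothness of $\varphi_L$ at $x$ is equivalent to injectivity of $\varphi_L^*\Omega^1_{\bP^S_k}\otimes\kappa(x)\to\Omega^1_U\otimes\kappa(x)$, and this injectivity translates directly into the nonvanishing of some $S\times S$ minor of the presenting matrix $A$, i.e.\ into $x\notin V(\Fit_{N-S}(\Omega^1_{\varphi_L}))$. The Stacks project lemma is stated in greater generality (for an arbitrary locally finite-type morphism, characterising the non-smooth locus via fitting ideals of $\Omega^1$), but your argument specialised to smooth source and target over a field is exactly what the paper needs and is entirely adequate here.
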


The advantage of defining the singular locus of a morphism using the fitting ideal is that this formation commutes with arbitrary base change.

\begin{prop}[{\cite[Lemma 0C3I]{stacks-project}}]\label{prop:singular locus commutes with base change}
	The formation of $Z$ commutes with base change, i.e. given a morphism of schemes $T\rightarrow k$, the pullback $Z_T$ is also the closed subscheme cut out by the $(N-S)$-th fitting ideal of $\Omega^1_{U_T/T}$.
\end{prop}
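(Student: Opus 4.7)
The plan is to reduce the claim to two standard facts: the compatibility of the cotangent sequence with base change, and the compatibility of Fitting ideals with base change.

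First, I would observe that for any morphism $T \to \Spec k$, the cotangent sequence
\[
\varphi_L^* \Omega^1_{\bP^S_k} \longrightarrow \Omega^1_{U} \longrightarrow \Omega^1_{\varphi_L} \longrightarrow 0
\]
pulls back along $U_T \to U$ to the analogous cotangent sequence of the base-changed morphism $\varphi_{L,T}\colon U_T \to \bP^S_T$. This follows from the standard compatibility of relative differentials with base change applied separately to $\bP^S_k/k$ and $U/k$, together with the right-exactness of pullback.

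Second, on an open subset $V \subset U$ where $\Omega^1_{\bP^S_k}$ and $\Omega^1_U$ are locally free, the first map in the sequence is represented by an $(N\times S)$-matrix $A$, and after base change the corresponding first map is represented by the entrywise pullback of $A$. The $(S\times S)$-minors of this pulled-back matrix are precisely the pullbacks of the $(S\times S)$-minors of $A$, so the ideal sheaf they generate on $V_T$ coincides with the pullback of $\Fit_{N-S}(\Omega^1_{\varphi_L})|_V$. Gluing over a cover of $U$ by such opens $V$ yields the global identity $\Fit_{N-S}(\Omega^1_{\varphi_{L,T}}) = (U_T \to U)^{-1}\Fit_{N-S}(\Omega^1_{\varphi_L}) \cdot \cO_{U_T}$, whence $V(\Fit_{N-S}(\Omega^1_{\varphi_{L,T}})) = Z_T$.

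The main obstacle is purely notational: one has to keep track of the canonical identifications between the pullbacks of the ambient cotangent sheaves and the cotangent sheaves of the base-changed morphism. Once these identifications are recorded, the proposition reduces to the purely algebraic fact that computing minors of a matrix commutes with ring homomorphisms, which is what makes Fitting ideals compatible with base change in general.
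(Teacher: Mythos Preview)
Your argument is correct and is essentially the standard proof of the cited Stacks Project lemma. The paper itself does not supply a proof of this proposition; it simply records the statement with a reference to \cite[Lemma 0C3I]{stacks-project}, so there is no in-paper argument to compare against. Your two-step reduction (base change of the cotangent sequence, then compatibility of minors with ring maps) is exactly the content of that reference, and the only cosmetic point worth noting is that the locally free hypothesis on $\Omega^1_U$ and $\varphi_L^*\Omega^1_{\bP^S_k}$ is automatic here since $U$ and $\bP^S_k$ are smooth over $k$.
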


Now we restrict ourselfs to pencils of quadrics in characteristic $2$. 
Let $L=\pencil{f,g}\subset H^0(\bP_k^{2M+1}, \cO(2))$ be a pencil of quadrics in the normal form given by Corollary \ref{cor:normal form of pencil}:
	\[
		f= \displaystyle\sum_{i=0}^M a_ix_iy_i,\ 
		g= \displaystyle\sum_{i=0}^M b_ix_iy_i+x_i^2+y_i^2,
 	\]
	where $a_i\neq 0$ for all $i$, and $\det \left[\begin{smallmatrix}
		a_i & a_j\\
		b_i & b_j
	\end{smallmatrix}\right] \neq 0$ for all $i\neq j$. 
As a direct corollary of Proposition \ref{prop:singular locus commutes with base change}, we get

\begin{cor}\label{cor:Aut preserves singular locus}
	The action of $\Aut_X \subset \PGL_{2M+2}$ on $\bP_k^{2M+1}$ sends $Z=V(\Fit(\Omega^1_{\varphi_L}))$ to itself.
\end{cor}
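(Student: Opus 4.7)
The plan is to show that every $\sigma \in \Aut_X \subset \PGL_{2M+2}$ descends along $\varphi_L$ to an automorphism of $\bP^1_k$, and then to conclude using the base-change compatibility of the fitting ideal.

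First, I would identify the pencil intrinsically as $L = H^0(\bP^{2M+1}_k, \cI_X(2))$. Twisting the Koszul resolution
\[
0 \to \cO_{\bP^{2M+1}_k}(-4) \to \cO_{\bP^{2M+1}_k}(-2)^{\oplus 2} \to \cI_X \to 0
\]
by $\cO(2)$ and taking cohomology (using $H^0(\cO(-2)) = H^1(\cO(-2)) = 0$ on $\bP^{2M+1}_k$) yields $\dim H^0(\bP^{2M+1}_k, \cI_X(2)) = 2$, so this space coincides with $L$. Since $\sigma$ preserves $X$, it preserves $\cI_X$ and hence acts linearly on $L$; this action induces an automorphism $\tau \in \Aut(\bP^1_k)$ such that $\varphi_L \circ \sigma = \tau \circ \varphi_L$ on $U = \bP^{2M+1}_k \setminus X$.

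Second, from the equivariance I would deduce the result. Because $\tau$ is an isomorphism of $\bP^1_k$, the relative cotangent sheaves fit into canonical isomorphisms
\[
\sigma^* \Omega^1_{\varphi_L} \cong \Omega^1_{\varphi_L \circ \sigma} = \Omega^1_{\tau \circ \varphi_L} \cong \Omega^1_{\varphi_L},
\]
so the associated fitting ideals agree under $\sigma^*$. Equivalently, applying Proposition \ref{prop:singular locus commutes with base change} directly to the isomorphism $\sigma: U \to U$ gives $\sigma^{-1}(Z) = Z$.

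The argument is essentially formal once $L$ is identified intrinsically, so there is no real obstacle; the only subtlety is to keep track of the fact that $\sigma$ acts on the source $U$ while the induced automorphism $\tau$ acts on the target $\bP^1_k$ of the pencil morphism.
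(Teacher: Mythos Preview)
Your proof is correct and takes the same route as the paper, which states this corollary as a direct consequence of Proposition \ref{prop:singular locus commutes with base change} without further argument. The equivariance step you supply---that $\sigma$ preserves the pencil $L = H^0(\cI_X(2))$ and hence descends to some $\tau \in \Aut(\bP^1_k)$---is exactly what the paper later formalizes as the pushforward $\varphi_*$.
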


The next proposition computes the fitting ideal of $L$ and the corresponding singular locus $Z$ explicitly.

\begin{prop}\label{prop:fitting ideal of pencil}
	The $2M$-th fitting ideal of $\Omega^1_{\varphi_L}$ is $((b_0f+a_0g)y_0,(b_0f+a_0g)x_0, \dots ,(b_Mf+a_Mg)y_M,(b_Mf+a_Mg)x_M)$, and the singular locus $Z$ can be written as a disjoint union $\coprod_{i=0}^M Z_i$, where $Z_i=V(x_0,y_0,\dots, \hat x_i, \hat y_i, \dots, x_n, y_n,  (x_i+y_i)^2)$.
\end{prop}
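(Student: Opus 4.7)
The plan is to compute the fitting ideal in a standard affine chart, then identify the zero locus set-theoretically, and finally pin down the scheme structure at each support point.

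First I would cover $U$ by standard affines and work on, say, $U\cap\{x_0\neq 0, g\neq 0\}$. Setting $x_0=1$, the morphism $\varphi_L$ is the regular function $f/g$ landing in $\mathbb{A}^1\subset \bP^1_k$, and the fitting ideal is generated by the coefficients of $d(f/g)=(g\,df+f\,dg)/g^2$ with respect to the basis $dx_1,\dots,dx_M,dy_0,\dots,dy_M$ of $\Omega^1_U$. The key computational simplification afforded by characteristic $2$ is that $d(x_i^2)=d(y_i^2)=0$, so
\[
df=\sum_{i=0}^M a_i(y_i\,dx_i+x_i\,dy_i),\qquad dg=\sum_{i=0}^M b_i(y_i\,dx_i+x_i\,dy_i),
\]
giving $g\,df+f\,dg=\sum_{i=0}^M(a_ig+b_if)(y_i\,dx_i+x_i\,dy_i)$. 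Since $g$ is a unit on this chart, the fitting ideal equals $((a_ig+b_if)y_i,(a_ig+b_if)x_i)_{i=0}^M$. The analogous computations on the other standard affines and on $\{f\neq 0\}$ agree by symmetry, so the global generators are as claimed.

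Next I would locate $Z$ set-theoretically. For $p\in Z\subset U$, the generators force $(a_ig(p)+b_if(p))x_i(p)=(a_ig(p)+b_if(p))y_i(p)=0$ for every $i$. Letting $I(p)=\{i:(x_i(p),y_i(p))\neq(0,0)\}$, we must have $a_ig(p)+b_if(p)=0$ for all $i\in I(p)$. If $|I(p)|\geq 2$, the determinant hypothesis $\det\!\left[\begin{smallmatrix}a_i&a_j\\b_i&b_j\end{smallmatrix}\right]\neq 0$ forces $f(p)=g(p)=0$, contradicting $p\in U$. So $|I(p)|=1$, say $I(p)=\{i\}$, and $p$ lies on $L_i:=V(x_j,y_j:j\neq i)\cong\bP^1$. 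On $L_i$ we have $f=a_ix_iy_i$ and $g=b_ix_iy_i+x_i^2+y_i^2$, hence
\[
a_ig+b_if=a_i(x_i^2+y_i^2)=a_i(x_i+y_i)^2
\]
in characteristic $2$, which vanishes exactly at the point $p_i$ whose only nonzero coordinates are $x_i=y_i$. Thus $Z$ is set-theoretically $\{p_0,\dots,p_M\}$.

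Finally I would compute the scheme structure near each $p_i$. For $j\neq i$, the value $(a_jg+b_jf)(p_i)=a_jb_i+b_ja_i$ is nonzero by the determinant condition, so $(a_jg+b_jf)$ is a unit in $\mathcal{O}_{U,p_i}$. Hence the generators $(a_jg+b_jf)x_j$ and $(a_jg+b_jf)y_j$ with $j\neq i$ contribute exactly $x_j,y_j$ to the local ideal, and the remaining generators $(a_ig+b_if)x_i,(a_ig+b_if)y_i$ collapse to $(a_ig+b_if)$ since $x_i,y_i$ are units near $p_i$. Reducing modulo $(x_j,y_j)_{j\neq i}$, the formula $a_ig+b_if\equiv a_i(x_i+y_i)^2$ from the previous step identifies the last generator with $(x_i+y_i)^2$ up to a unit, yielding $Z_i=V(x_j,y_j:j\neq i;\,(x_i+y_i)^2)$ and the disjointness of the $Z_i$. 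There is no deep obstacle here; the main care is to make the local-to-global translation of the fitting ideal explicit and to notice that the characteristic-$2$ identity $x_i^2+y_i^2=(x_i+y_i)^2$ produces precisely the length-$2$ scheme structure claimed—not a reduced point and not a longer scheme.
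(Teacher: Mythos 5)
Your proof is correct, and the first half (the local computation of $d(f/g)$ on a standard affine chart and the identification of the generators of $\Fit_{2M}(\Omega^1_{\varphi_L})$) is essentially identical to the paper's argument, with the pleasant extra remark that $d(x_i^2)=d(y_i^2)=0$ in characteristic $2$ is what makes $dg$ collapse. Where you genuinely diverge is in the second half. The paper identifies $Z$ by a purely set-theoretic manipulation, distributing $\bigcap_i\bigl(V(b_if+a_ig)\cup V(x_i,y_i)\bigr)$ and discarding the empty terms via the disjointness of the $V(b_if+a_ig)$ in $U$; it then simply writes down the non-reduced scheme $Z_i=V(x_j,y_j:j\neq i;\,(x_i+y_i)^2)$ without justifying the scheme structure. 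You instead do a pointwise analysis to locate the support and then compute the localization of the fitting ideal at each $p_i$, observing that $(a_jg+b_jf)$ is a unit there for $j\neq i$ while $x_i,y_i$ are units, so the local ideal is exactly $(x_j,y_j:j\neq i)+(a_ig+b_if)=(x_j,y_j:j\neq i,(x_i+y_i)^2)$. This is a strictly more complete argument: the length-$2$ structure is not cosmetic, since later (in the proof of Proposition \ref{prop:action of elements in pi_0 on subspace}) the identification $H^0(\bP^{2M+1},\cI_i(1))=\pencil{x_0,y_0,\dots,\hat x_i,\hat y_i,\dots,x_M,y_M}$ relies on $x_i+y_i$ \emph{not} lying in the ideal of $Z_i$, which would fail for the reduced structure. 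So your route both reproduces the paper's computation and supplies the verification of the scheme structure that the paper leaves implicit.
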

\begin{proof}
	Let $[s:t]$ be the coordinates of $\bP^1_k$, we may work locally on $D(x_0)$ on the domain and on $D(s)$ on the base. 
	The rest cases will then follow from symmetry. 
	We denote $\bar f=f(1,x_1,\dots, x_M,y_0,\dots, y_M), \bar g=g(1,x_1,\dots, x_M,y_0,\dots, y_M)$. 
	The preimage of $D(s)$ under $\varphi_L$ is $D(f)$, on which one has the following identification:
	\[\begin{tikzcd}
		{\varphi_L^* \Omega^1_{D(s)}} & {\Omega^1_{D(x_0)\cap D(f)}} & {\Omega^1_{\varphi_L}} & 0 \\
		{\cO_{D(x_0)\cap D(f)}\cdot d(\frac g f)} & {\bigoplus_{i=1}^M\cO_{D(x_0)\cap D(f)}\cdot dx_i \oplus \bigoplus_{i=0}^M\cO_{D(x_0)\cap D(f)}\cdot dy_i} & {\Omega^1_{\varphi_L}} & 0.
		\arrow[from=1-1, to=1-2]
		\arrow["{\rotatebox[origin=c]{270}{$\cong$}}"{description}, draw=none, from=1-1, to=2-1]
		\arrow[from=1-2, to=1-3]
		\arrow["{\rotatebox[origin=c]{270}{$\cong$}}"{description}, draw=none, from=1-2, to=2-2]
		\arrow[from=1-3, to=1-4]
		\arrow["{\rotatebox[origin=c]{270}{$=$}}"{description}, draw=none, from=1-3, to=2-3]
		\arrow[from=2-1, to=2-2]
		\arrow[from=2-2, to=2-3]
		\arrow[from=2-3, to=2-4]
	\end{tikzcd}\]
	Then one simply computes
	\begin{align*}
		d(\frac {\bar g}{\bar f})&=
		\frac{1}{\bar f^2} \left(
		\bar f \cdot \sum_{i=1}^M \left( \frac{\partial \bar g}{\partial x_i} \cdot dx_i \right)
		+ 
		\bar f \cdot \sum_{i=0}^M \left( \frac{\partial \bar g}{\partial y_i} \cdot dy_i \right)
		+
		\bar g \cdot \sum_{i=1}^M \left( \frac{\partial \bar f}{\partial x_i} \cdot dx_i \right)
		+ 
		\bar g \cdot \sum_{i=0}^M \left( \frac{\partial \bar f}{\partial y_i} \cdot dy_i \right)
		\right)\\
		&=\frac{1}{\bar f^2} \left(
		\sum_{i=1}^M (b_i\bar f+a_i\bar g)y_i \cdot dx_i
		+ 
		(b_0\bar f+a_0\bar g) \cdot dy_0
		+
		\sum_{i=1}^M (b_i\bar f+a_i\bar g)x_i \cdot dy_i
		\right)
	\end{align*}
	and the fitting ideal is defined by the coefficient elements of $dx_i$ and $dy_i$.
	
	The computation of $V(\Fit_{2M}(\Omega^1_{\varphi_L}))$ is purely set-theoretical. Note that
	\begin{gather*}
		V\left( \bigcup_{i=0}^M ((b_if+a_ig)y_i, (b_if+a_ig)x_i) \right)= \bigcap_{i=0}^M V((b_if+a_ig)y_i, (b_if+a_ig)x_i))
		= \bigcap_{i=0}^M V(b_if+a_ig) \cup V(x_i,y_i)\\
		= \bigcup_{i=0}^M V(x_0,y_0,\dots, \hat x_i, \hat y_i, \dots, x_n, y_n, b_if+a_ig)
		= \bigcup_{i=0}^M V(x_0,y_0,\dots, \hat x_i, \hat y_i, \dots, x_n, y_n, x_i^2+y_i^2).
	\end{gather*}
	The third equality uses that $V(b_if+a_ig)$ and $V(b_jf+a_jg)$ are disjoint in $U$ as $a_ib_j-a_jb_i\neq 0$ for all $i\neq j$. It is obvoius that $Z_i= V(x_0,y_0,\dots, \hat x_i, \hat y_i, \dots, x_n, y_n, x_i^2+y_i^2)$ are mutually distinct, and the second assertion follows.
\end{proof}

\begin{cor}\label{cor: image of singular locus under phi}
	The scheme theoretic image of $Z_i$ under $\varphi$ in $\bP_k^1$ is the point $[a_i:b_i]$ with reduced structure.
\end{cor}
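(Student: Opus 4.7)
The plan is to prove the corollary by a direct local computation on each connected component $Z_i$. By Proposition~\ref{prop:fitting ideal of pencil}, $Z_i$ is cut out in $\bP^{2M+1}_k$ by $\{x_j = y_j = 0 : j\neq i\} \cup \{(x_i+y_i)^2 = 0\}$, so set-theoretically $Z_i$ is the single closed point $p_i$ whose only non-zero coordinates are $x_i = y_i = 1$. On the affine chart $D(x_i)$, setting $t := y_i/x_i$ and $\epsilon := t + 1$, the scheme $Z_i$ is isomorphic to $\Spec k[\epsilon]/(\epsilon^2)$, i.e.\ a length-two fat point supported at $p_i$.

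Next I would compute $\varphi\vert_{Z_i}$ explicitly. Dehomogenising $f$ and $g$ by $x_i^2$ and using the vanishing of all other coordinates on $Z_i$ gives
\[
	f\vert_{Z_i}/x_i^2 \,=\, a_i t \,=\, a_i(1+\epsilon), \qquad g\vert_{Z_i}/x_i^2 \,=\, b_i t + 1 + t^2 \,=\, b_i(1+\epsilon) + 1 + (1+\epsilon)^2.
\]
In characteristic $2$ one has $(1+\epsilon)^2 = 1 + \epsilon^2 = 1$, so the last two summands cancel and $g\vert_{Z_i}/x_i^2 = b_i(1+\epsilon)$. Thus both restrictions carry the same invertible factor $(1+\epsilon) \in \left(k[\epsilon]/(\epsilon^2)\right)^{\times}$, and as a $k[\epsilon]/(\epsilon^2)$-valued point of $\bP^1_k$ we obtain
\[
	\varphi\vert_{Z_i} \,=\, [\,a_i(1+\epsilon) \,:\, b_i(1+\epsilon)\,] \,=\, [\,a_i : b_i\,].
\]

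It would then follow that $\varphi\vert_{Z_i}$ factors through the reduced closed point $\Spec k \hookrightarrow \bP^1_k$ at $[a_i:b_i]$, so the scheme-theoretic image of $Z_i$ is exactly this reduced point, as claimed. The only non-formal step is the characteristic-$2$ cancellation $1 + (1+\epsilon)^2 = 0$: it is precisely this identity that forces the pencil to deform in the same direction along $Z_i$, and without it one could only conclude that the image is supported at $[a_i:b_i]$ while possibly carrying an extra infinitesimal structure.
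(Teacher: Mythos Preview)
Your proof is correct and follows essentially the same approach as the paper: a direct local computation of the restriction of $\varphi$ to $Z_i$. The paper phrases it as computing the homogeneous kernel of the graded ring map $k[s,t]\to k[x_i,y_i]/(x_i^2+y_i^2)$ sending $s\mapsto a_ix_iy_i$, $t\mapsto b_ix_iy_i$, whereas you dehomogenise on $D(x_i)$ and compute $\varphi|_{Z_i}$ as a $k[\epsilon]/(\epsilon^2)$-point; these are two presentations of the same calculation, and your explicit identification of the characteristic-$2$ cancellation $1+(1+\epsilon)^2=0$ is exactly the step that in the paper's version makes $g$ restrict to $b_ix_iy_i$ modulo $(x_i^2+y_i^2)$.
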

\begin{proof}
	The scheme theoretic image of $Z_i$ is defined by the homogeneous kernel of
	\begin{gather*}
		k[s,t] \rightarrow k[x_0,\dots,x_M,y_0,\dots,y_M]/(x_0,y_0,\dots, \hat x_i, \hat y_i, \dots, x_n, y_n, x_i^2+y_i^2),\\
		s \mapsto f=a_ix_iy_i, t \mapsto g=b_ix_iy_i.
	\end{gather*}
	It is easy to see that the kernel is $(b_is-a_it)$ corresponding to the point $[a_i:b_i]$.
\end{proof}

\subsection{The lifting group with respect to a line bundle}\label{subsection: lifting group}
Let $X$ be a projective variety and $\cL$ a line bundle on $X$. 
Let $L:=\bA_X(\cL)$ be the corresponding geometric line bundle and denote $\pi: L \rightarrow X$ for the natural projection. 
The bundle $L$ comes naturally along with a $\bG_m$ action, and we define $\Aut^{\bG_m}_{X,\cL}$ as the centralizer of $\bG_m$ in $\Aut_L$.
\begin{prop}[{\cite[Proposition 2.13]{brion2018notes}}]\label{prop: exact sequence of lifting group}
	The functor $\Aut^{\bG_m}_\cL$ is represented by a locally algebraic group, and there is an exact sequence
	\[\begin{tikzcd}
		1 & {\bG_m} & {\Aut^{\bG_m}_{X,\cL}} & {\Aut_{X,\cL}} & 1.
		\arrow[from=1-1, to=1-2]
		\arrow[from=1-2, to=1-3]
		\arrow[from=1-3, to=1-4]
		\arrow[from=1-4, to=1-5]
	\end{tikzcd}\]
	Moreover, there is a section $\Aut_{X,\cL} \rightarrow \Aut^{\bG_m}_{X,\cL}$ as morphism of schemes (but not necessarily as morphism of group schemes).
\end{prop}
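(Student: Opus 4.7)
The plan is to verify exactness of the sequence term by term and then address the section, which I expect to be the most delicate part.

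First I would construct the morphism $\Aut^{\bG_m}_{X,\cL} \to \Aut_{X,\cL}$: a $T$-point $\Phi\colon L_T \to L_T$ centralizing the $\bG_m$-action preserves the zero section (which coincides with the $\bG_m$-fixed locus), so $\Phi$ descends to an automorphism $\phi$ of $X_T$. Commuting with $\bG_m$ forces $\Phi$ to act linearly on fibers of $\pi$, which encodes an isomorphism $\phi^*\cL_T \cong \cL_T$ and shows $\phi \in \Aut_{X,\cL}(T)$. Conversely, fiberwise scaling defines an inclusion $\bG_m \hookrightarrow \Aut^{\bG_m}_{X,\cL}$ landing in the kernel. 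An arbitrary element of the kernel covers the identity and must act fiberwise by a scalar, yielding a global section of $\cO_{X_T}^\times$; by properness of $X$ this lies in $\cO_T^\times = \bG_m(T)$, so the kernel is exactly $\bG_m$. Finally, for surjectivity as an fppf sheaf map, given $\phi \in \Aut_{X,\cL}(T)$ one has $\phi^*\cL_T \cong \cL_T$ after pulling back along an fppf cover of $T$ (using that $\phi$ fixes $[\cL]$ in $\Pic_X(T)$), and any such isomorphism lifts $\phi$ to an element of $\Aut^{\bG_m}_{X,\cL}(T')$.

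Representability is then formal: $\Aut_{X,\cL}$ is locally algebraic, representable for instance via the Hilbert scheme of $X \times X$ cut out by the conditions that both projections are isomorphisms preserving the class of $\cL$; and the above exhibits $\Aut^{\bG_m}_{X,\cL}$ as a $\bG_m$-torsor over it, which is automatically representable.

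The main obstacle is the scheme-theoretic section. The $\bG_m$-torsor above corresponds to a line bundle $\cN$ on $\Aut_{X,\cL}$, and such a section exists iff $\cN$ is trivial. The natural strategy is to choose $n$ large enough that $\cL^{\otimes n}$ is very ample, embed $X \hookrightarrow \bP(V^\vee)$ with $V = H^0(X, \cL^{\otimes n})$, identify $\Aut_{X,\cL}$ with a closed subgroup of $\PGL(V)$, and then construct the lift as a scheme map by restricting an explicit scheme-theoretic (non-multiplicative) splitting of $\GL(V) \to \PGL(V)$ built from affine charts. The construction requires care, since the ambient torsor $\GL(V) \to \PGL(V)$ is already nontrivial; one has to exploit that the section need not be a group homomorphism and verify that the restriction to the particular subscheme $\Aut_{X,\cL}$ trivializes, possibly after replacing $n$ by a multiple to kill Picard obstructions. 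This gluing step, rather than the exactness verification, is where the real work lies.
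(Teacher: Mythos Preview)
The paper does not supply its own proof of this proposition: it is quoted directly from Brion's notes \cite[Proposition 2.13]{brion2018notes} and no argument is given in the text. So there is nothing in the paper to compare your proposal against.

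On the substance of your proposal: the exactness verification and the representability sketch are essentially correct and standard. Your identification of the section as the delicate point is also right, and your observation that the ambient torsor $\GL(V)\to\PGL(V)$ is already nontrivial (its class generates $\Pic(\PGL(V))\cong\bZ/\dim V\,\bZ$) is the genuine obstruction. However, your proposed fix---``restricting an explicit scheme-theoretic splitting built from affine charts'' and ``replacing $n$ by a multiple to kill Picard obstructions''---does not close the gap. Local sections of a $\bG_m$-torsor do not glue to a global scheme-theoretic section unless the torsor is globally trivial, and passing from $\cL$ to $\cL^{\otimes n}$ changes the torsor by an $n$-th power, which need not kill the class on $\Aut_{X,\cL}$. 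The cleaner route (and the one Brion takes) is to describe the line bundle on $\Aut_{X,\cL}$ underlying the torsor directly: it is the bundle $\cN$ characterized by $\sigma^*\cL\cong \cL\otimes p^*\cN$ for the universal automorphism $\sigma$, and one trivializes $\cN$ by restricting along $\{x_0\}\times\Aut_{X,\cL}$ for a chosen $k$-point $x_0\in X$ and unwinding. If you want a self-contained argument, that is the line to pursue rather than the $\PGL(V)$ embedding.
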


\begin{ex}
	For projective space $\bP^n_k$ and $\cO(1)$, we simply have $\Aut^{\bG_m}_{\bP^n_k,\cO(1)}\cong \GL_{n+1}$ and we recover $\PGL_{n+1}\cong \GL_{n+1}/\bG_m$.
\end{ex}

Now let $X=V(f,g)$ be a $(2,2)-$complete intersection in $\bP^n_k$,  then there is a morphism of group schemes $\Aut^{\bG_m}_{X,\cO_X(1)} \rightarrow \Aut^{\bG_m}_{\bP^n_k,\cO_{\bP^n_k}(1)} \cong \GL_{n+1}$ making the following diagram commute:
\[\begin{tikzcd}
	1 & {\bG_m} & {\Aut^{\bG_m}_{X,\cO_X(1)}} & {\Aut_{X,\cO_X(1)}} & 1 \\
	1 & {\bG_m} & {\GL_{n+1}} & {\PGL_{n+1}} & {1.}
	\arrow[from=1-1, to=1-2]
	\arrow[from=1-2, to=1-3]
	\arrow[from=1-2, to=2-2]
	\arrow[from=1-3, to=1-4]
	\arrow[from=1-3, to=2-3]
	\arrow[from=1-4, to=1-5]
	\arrow[from=1-4, to=2-4]
	\arrow[from=2-1, to=2-2]
	\arrow[from=2-2, to=2-3]
	\arrow[from=2-3, to=2-4]
	\arrow[from=2-4, to=2-5]
\end{tikzcd}\]
In particular, as the first and the third vertical arrows are closed embeddings, the morphism $\Aut^{\bG_m}_{X,\cO_X(1)} \rightarrow \GL_{n+1}$ is also a closed embedding.

\subsection{Pushforward of automorphisms}
There is a natural action of $\GL_{2M+2}$ on the complete linear system $H^0(\bP^{2M+1}_k,\cO(2))$ together with its pullback to any base. 
By the last section, $\Aut^{\bG_m}_{X,\cO_X(1)}$ is a closed subgroup of $\GL_{2M+2}$, and it fixes the subsystem $\pencil{f,g}\subset H^0(\bP^{2M+1}_k,\cO(2))$. 
We define a morphism from the group $\Aut^{\bG_m}_{X,\cO_X(1)}$ to $\GL_2$ as follows: 
Given $\sigma\in \Aut^{\bG_m}_{X,\cO_X(1)}(T)$, since $\sigma$ fixes the subsystem $\pencil{f_T,g_T}$ in $H^0(\bP^{2M+1}_T,\cO(2))$, there is a unique element in $\GL_2(T)$ corresponding to the action of $\sigma$ on $\pencil{f_T,g_T}$. 
Moreover, by taking the quotients on both sides by $\bG_m$, we get also a morphism $\Aut_X=\Aut_{X,\cO_X(1)} \rightarrow \PGL_2$. 
We denote the both morphisms with $\varphi_*$ by a slight abuse of notation, since we have the following observation:
Take the morphism $\varphi:\bP^{2M+1}_k\backslash X \rightarrow \bP^1_k$ defined by the linear system $\pencil{f,g}$, then the automorphism $\varphi_*\sigma:\bP^1_T \rightarrow \bP^1_T$ over $T$ fits into the following diagram:
\[\begin{tikzcd}
	{\bP^{2M+1}_T\backslash X_T} & {\bP^{2M+1}_T\backslash X_T} \\
	{\bP^1_T} & {\bP^1_T\ .}
	\arrow["\sigma", from=1-1, to=1-2]
	\arrow["{\varphi_T}"', from=1-1, to=2-1]
	\arrow["{\varphi_T}", from=1-2, to=2-2]
	\arrow["{\varphi_*\sigma}", from=2-1, to=2-2]
\end{tikzcd}\]
\begin{rmk}
	If one takes a basis $x_0,\dots,x_M,y_0,\dots,y_M \in H^0(\bP^{2M+1}_T,\cO(1))$ and also consider the $\Aut^{\bG_m}_{X,\cO_X(1)}$-action on it, then the automorphism $\varphi_*(\sigma)$ on $\pencil{f_T,g_T}$ can be described as
	\begin{align*}
		f_T(x_0,\dots,x_M,y_0,\dots,y_M)&\mapsto f_T(\sigma (x_0,\dots,x_M,y_0,\dots,y_M)),\\
		g_T(x_0,\dots,x_M,y_0,\dots,y_M)&\mapsto g_T(\sigma (x_0,\dots,x_M,y_0,\dots,y_M)).
	\end{align*}
\end{rmk}

The fibration $\varphi_{\pencil{f,g}}$ is therefore equivariant with respect to the defined pushforward $\varphi_*:\Aut^{\bG_m}_{X,\cO_X(1)} \rightarrow \GL_2$. 
Let $Z$ be the singular locus of $\varphi$ defined by the fitting ideal of $\pencil{f,g}$. 
By Lemma \ref{cor:Aut preserves singular locus}, the action of $\Aut_X$ on $\bP^{2M+1}_k\backslash X$ sends $Z$ to itself, hence we also have

\begin{prop}\label{prop: pushforawrd of Aut sends image of singular locus to itself}
	The pushforward $\varphi_*\left(\Aut_{X,\cO_X(1)}^{\bG_m} \right)$ (resp. $\varphi_*(\Aut_X)$) sends $\varphi(Z)$ to itself. 
\end{prop}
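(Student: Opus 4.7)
The plan is to deduce this from equivariance of $\varphi$ (as established in the diagram preceding the statement) combined with the invariance of the singular locus under the automorphism action. Concretely, $\varphi(Z)$ is, by Corollary \ref{cor: image of singular locus under phi}, the finite reduced subscheme of $\bP^1_k$ consisting of the $M+1$ distinct points $\{[a_i:b_i]\}_{i=0}^M$, so the task is to check that for any scheme $T$ and any $\sigma \in \Aut^{\bG_m}_{X,\cO_X(1)}(T)$, the $T$-automorphism $\varphi_*\sigma$ of $\bP^1_T$ preserves $\varphi(Z)_T$ as a closed subscheme.

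First, I would record that $\sigma$ restricts to an automorphism of $Z_T$. The formation of the singular locus commutes with arbitrary base change by Proposition \ref{prop:singular locus commutes with base change}, so $Z_T$ is cut out in $\bP^{2M+1}_T \setminus X_T$ by the $2M$-th fitting ideal of $\Omega^1_{\varphi_{L,T}}$. Corollary \ref{cor:Aut preserves singular locus} (applied after base change to $T$, which is permissible for the same reason) then gives $\sigma(Z_T) = Z_T$.

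Second, I would combine this with the equivariance relation $\varphi_T \circ \sigma = \varphi_*\sigma \circ \varphi_T$ from the commutative diagram that precedes the proposition. Restricting both sides to $Z_T$ yields
\[
	(\varphi_*\sigma) \circ \varphi_T|_{Z_T} \;=\; \varphi_T \circ \sigma|_{Z_T}.
\]
The right-hand side factors through $Z_T$ and hence through the scheme-theoretic image $\varphi_T(Z_T)$. Because $\varphi(Z) = \coprod_i [a_i:b_i]$ is a finite reduced closed subscheme of $\bP^1_k$, its formation commutes with the base change $T \to \Spec k$, i.e.\ $\varphi_T(Z_T) = \varphi(Z)_T$. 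Thus $\varphi_*\sigma$ sends $\varphi(Z)_T$ into itself; applying the same reasoning to $\sigma^{-1}$ upgrades this inclusion to equality.

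Since this holds functorially in $T$, the morphism $\varphi_*$ factors through the stabilizer of $\varphi(Z)$ in $\GL_2$ (respectively $\PGL_2$), which is the content of the proposition. I do not anticipate a real obstacle: the argument is purely formal once one knows that $Z$ is preserved (Corollary \ref{cor:Aut preserves singular locus}) and that $\varphi(Z)$ is a finite reduced scheme (Corollary \ref{cor: image of singular locus under phi}), the latter ensuring that the scheme-theoretic image behaves well under base change.
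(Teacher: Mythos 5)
Your argument is correct and is exactly the one the paper intends: the proposition is stated there without a formal proof, as an immediate consequence of the equivariance diagram for $\varphi$ together with Corollary \ref{cor:Aut preserves singular locus}. Your write-up simply makes explicit the base-change bookkeeping (via Proposition \ref{prop:singular locus commutes with base change} and the compatibility of the scheme-theoretic image $\varphi(Z)$ with the flat base change $T \to \Spec k$) that the paper leaves implicit.
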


Using the pushforward, we may vastly restrict the possible forms of elements in $\Aut_{X,\cO_X(1)}^{\bG_m}$.

\begin{note}\label{not: notation for the scalars}
	Assume that $\pencil{f,g}$ is given by the normal form 
	\[
			f= \displaystyle\sum_{i=0}^M a_ix_iy_i,\ 
			g= \displaystyle\sum_{i=0}^M b_ix_iy_i+x_i^2+y_i^2,
 	\]
	where $a_i\neq 0$ for all $i$, and $\det \left[\begin{smallmatrix}
		a_i & a_j\\
		b_i & b_j
	\end{smallmatrix}\right] \neq 0$ for all $i\neq j$
	by Corollary \ref{cor:normal form of pencil}. 
	So $\varphi(Z)$ are the reduced points $\coprod_i [a_i:b_i]$ by Corollary \ref{cor: image of singular locus under phi}.
	Given $R$ an Artinian local ring and $\zeta\in \varphi_*(\Aut_X) (R)$, there is a permutation $\tau\in S_{M+1}$ such that $\zeta \cdot [a_i:b_i]=[a_{\tau(i)}:b_{\tau(i)}]$ by Proposition \ref{prop: pushforawrd of Aut sends image of singular locus to itself}. 
\end{note}

\begin{prop}\label{prop:action of elements in pi_0 on subspace}
	Let $\sigma\in \Aut_{X,\cO_X(1)}^{\bG_m}(R)$ and use the coordinate $[x_0\dots,x_M,y_0,\dots,y_M]$ for $\bP^{2M+1}_R$. 
	Let $\tau$ be the permutation in $S_{M+1}$ associated to $\varphi_*(\sigma)$ in Notation \ref{not: notation for the scalars}.
	The induced action $\sigma^*$ on $H^0(\bP^{2M+1}_R,\cO(1))$ sends the subspaces $\pencil{x_{\tau(i)},y_{\tau(i)}}$ to $\pencil{x_i,y_i}$ and $\pencil{x_{\tau(i)}+y_{\tau(i)}}$ to $\pencil{x_i+y_i}$ for all $0\leq i\leq M$.
	In particular, $\sigma^*$ has the following form:
	\begin{align*}
		x_{\tau(i)} &\mapsto \alpha_i x_i + \beta_i y_i,\\
		y_{\tau(i)} &\mapsto \beta_i x_i + \alpha_i y_i.
	\end{align*}
\end{prop}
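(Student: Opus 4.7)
The plan is to decode $\sigma^*$ in three passes: the block-permutation structure from preservation of singular quadric radicals, the distinguished line $\pencil{x_i+y_i}$ from the fat-point geometry of $Z_i$, and the symmetric matrix form from the pencil constraint.

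\textbf{Step 1 (block structure).} Since $\sigma$ preserves $\pencil{f,g}$ and $\varphi_*\sigma$ permutes the points $[a_i:b_i]$ via $\tau$ by Notation~\ref{not: notation for the scalars}, one deduces $\sigma^*(q_{\tau(i)}) = \lambda_i q_i$ for units $\lambda_i \in R^\times$, where $q_i := b_i f + a_i g$. Passing to the associated alternating form $A_{q_i}$, whose radical computes (from the normal form) to the line $\ell_i = V(x_k, y_k: k \neq i)$, gives $\sigma(\ell_i) = \ell_{\tau(i)}$ as closed subschemes of $\bP^{2M+1}_R$. Translating to linear forms gives $\sigma^*\pencil{x_k,y_k: k\neq\tau(i)} = \pencil{x_k,y_k: k\neq i}$, and intersecting across indices using $\pencil{x_i,y_i} = \bigcap_{j \neq i}\pencil{x_k,y_k: k \neq j}$ yields the first assertion $\sigma^*\pencil{x_{\tau(i)},y_{\tau(i)}} = \pencil{x_i,y_i}$.

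\textbf{Step 2 (distinguished line).} By Corollary~\ref{cor:Aut preserves singular locus} together with the equivariance of $\varphi$, $\sigma$ sends the component $Z_i$ to $Z_{\tau(i)}$ as closed $R$-subschemes. Recalling $Z_i = V((x_i+y_i)^2, x_k, y_k: k \neq i)$, the generator $(x_{\tau(i)}+y_{\tau(i)})^2$ pulls back to an element of the ideal of $Z_i$ lying in $\sigma^*V_{\tau(i)}^{\otimes 2} = R[x_i,y_i]_2$ by Step~1; an elementary ideal computation shows $I_{Z_i} \cap R[x_i,y_i]_2 = R\cdot(x_i+y_i)^2$, so $\sigma^*\left((x_{\tau(i)}+y_{\tau(i)})^2\right) = u\cdot(x_i+y_i)^2$ for some $u \in R$. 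Writing $\sigma^*(x_{\tau(i)}+y_{\tau(i)}) = c_1 x_i + c_2 y_i$ and squaring in characteristic~$2$ yields $c_1^2 = c_2^2 = u$. Combining this fat-point relation with the identity $\alpha_k\gamma_k = \beta_k\delta_k$ coming from matching $x_k^2$- and $y_k^2$-coefficients of $\sigma^*(f) \in \pencil{f,g}$ (where $\sigma^*(x_{\tau(i)}) = \alpha_i x_i + \beta_i y_i$ and $\sigma^*(y_{\tau(i)}) = \gamma_i x_i + \delta_i y_i$) promotes $c_1^2 = c_2^2$ to $c_1 = c_2$, giving the second assertion $\sigma^*\pencil{x_{\tau(i)}+y_{\tau(i)}} = \pencil{x_i+y_i}$.

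\textbf{Step 3 (matrix form).} The second assertion yields the linear relation $\alpha_i + \gamma_i = \beta_i + \delta_i$, while the pencil constraint yields $\alpha_i\gamma_i = \beta_i\delta_i$. Thus $\{\alpha_i, \gamma_i\}$ and $\{\beta_i, \delta_i\}$ are roots of a common quadratic; the identification $\bar\alpha_i = \bar\beta_i$, $\bar\gamma_i = \bar\delta_i$ would force $\det(\sigma^*|_{V_{\tau(i)}})$ to vanish modulo $\fm_R$, contradicting invertibility. So $\bar\alpha_i = \bar\delta_i$ and $\bar\beta_i = \bar\gamma_i$; setting $u := \alpha_i + \delta_i = \beta_i + \gamma_i$ reduces $\alpha_i\gamma_i = \beta_i\delta_i$ to $u(\alpha_i + \beta_i) = 0$, and since $\alpha_i+\beta_i$ is a unit (its residue controls $\det$ modulo $\fm_R$), we conclude $u = 0$, whence $\delta_i = \alpha_i$ and $\gamma_i = \beta_i$ as claimed.

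The main obstacle is Step~2: the fat-point pullback alone only gives $(c_1 - c_2)^2 = 0$, which over an Artinian base is strictly weaker than $c_1 = c_2$. One must leverage the additional $x_k y_k$-coefficient matching from $\sigma^*(f), \sigma^*(g) \in \pencil{f,g}$ simultaneously with the fat-point relation, together with the non-degeneracy $a_i b_j - a_j b_i \neq 0$ of the normal form, to rule out nilpotent corrections that would otherwise survive.
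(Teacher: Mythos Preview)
Your three-pass structure mirrors the paper's proof very closely: the paper also first extracts the block structure from $H^0(\cI_{Z_i}(1))$, then the distinguished line from $H^0(\cI_{Z_i}(2))$, then the symmetric matrix form from the pencil constraint $\alpha_i\gamma_i=\beta_i\delta_i$. Your Step~1 via the radical of $A_{q_i}$ is a pleasant variant but lands on the same codimension-two subspaces $\pencil{x_k,y_k:k\neq i}$, and your Step~3 is a tidied-up version of the paper's final paragraph (your residue-field argument that $\alpha_i+\beta_i$ is a unit is cleaner than the paper's ``$c\neq 0$'').

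The issue you flag in Step~2 is real and is \emph{shared by the paper}: from $\sigma^*\bigl((x_{\tau(i)}+y_{\tau(i)})^2\bigr)\in\pencil{(x_i+y_i)^2}$ one obtains only $c_1^2=c_2^2$, i.e.\ $(c_1+c_2)^2=0$, and the paper's ``Hence $a=b$'' is unjustified when $R$ is non-reduced. However, your claimed patch in Step~2 --- that adjoining $\alpha_i\gamma_i=\beta_i\delta_i$ promotes $c_1^2=c_2^2$ to $c_1=c_2$ --- is false as stated. Over $R=k[\epsilon]/(\epsilon^2)$ the block $(\alpha,\beta,\gamma,\delta)=(1,0,0,1+\epsilon)$ satisfies both $\alpha\gamma=\beta\delta=0$ and $c_1^2=c_2^2=1$, yet $c_1=1\neq 1+\epsilon=c_2$, and the determinant $1+\epsilon$ is a unit. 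So Step~2 as written does not close, and your Step~3 (which takes $c_1=c_2$ as input) cannot start.

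Your closing paragraph is pointing in the right direction: what rules out such nilpotent blocks is not a local relation on a single block but the global fact that $\varphi_*(\sigma)\in\GL_2(R)$ must send each $k$-rational point $[a_i:b_i]$ \emph{exactly} to $[a_{\tau(i)}:b_{\tau(i)}]$ (since $\varphi(Z)$ is reduced over $k$), and the resulting relations $a_{\tau(i)}D_i=sa_i+tb_i$, $b_{\tau(i)}D_i=s'a_i+t'b_i$ together with the $x_i^2$-coefficient relations and the non-degeneracy $a_ib_j-a_jb_i\neq 0$ force the nilpotent correction to vanish. But you have not carried this out, so the gap --- which the paper simply elides --- remains open in your write-up as well.
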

\begin{proof}
	Recall that $Z=\coprod_{i=0}^M Z_i$ with $ Z_i=V\left(x_0,y_0,\dots, \hat x_i, \hat y_i, \dots, x_n, y_n, (x_i+y_i)^2 \right)$ by Proposition \ref{prop:fitting ideal of pencil}.
	Since $\sigma$ sends $Z_R:=Z\times R$ to $Z_R$ by Corollary \ref{cor:Aut preserves singular locus} and $\varphi_*(\sigma)$ sends $[a_i:b_i]$ to $[a_{\tau(i)}:b_{\tau(i)}]$ by assumption, we see that $\sigma$ sends $(Z_i)_R$ to $(Z_{\tau(i)})_R$ by Corollary \ref{cor: image of singular locus under phi}. 
	Let $\cI_i$ be the ideal sheaf defining $(Z_i)_R$, then there is an idenfication
	\[ 
		H^0(\bP_R^{2M+1}, \cI_i(1))=\Ker (H^0(\bP_R^{2M+1}, \cO(1)) \rightarrow H^0((Z_i)_R,\cO(1)))=\pencil{x_0,y_0,\dots, \hat x_i, \hat y_i,\dots,x_M,y_M},
	\] and $\sigma^*$ sends $H^0(\bP_R^{2M+1}, \cI_{\tau(i)}(1))$ to $H^0(\bP_R^{2M+1}, \cI_i(1))$. 
	Now that $\pencil{x_{\tau(i)},y_\tau(i)}$ is sent to $\pencil{x_i,y_i}$ follows from the identities $\pencil{x_\tau(i),y_\tau(i)}=\bigcap_{\tau(j)\neq \tau(i)} H^0(\bP_R^{2M+1}, \cI_{\tau(j)}(1))$ and $\pencil{x_i,y_i}=\bigcap_{j\neq i} H^0(\bP_R^{2M+1}, \cI_i(1))$. 
	So we get $\sigma^*(x_{\tau(i)}+y_{\tau(i)})=ax_i+by_i$, hence $\sigma^*(x_{\tau(i)}^2+y_{\tau(i)}^2)=a^2x_i^2+b^2y_i^2$. 
	But a similar argument on $H^0(\bP_R^{2M+1}, \cI_i(2))$ shows that $\pencil{x_{\tau(i)}^2+y_{\tau(i)}^2}$ is sent to $\pencil{x_i^2+y_i^2}$. 
	Hence $a=b$ and $\pencil{x_{\tau(i)}+y_{\tau(i)}}$ is sent to $\pencil{x_i+y_i}$. 
	So any action $\sigma^*$ on $H^0(\bP^{2M+1},\cO(1))$ induced by $\sigma\in \Aut_{X,\cO_X(1)}^{\bG_m}(k)$ has the form
	\begin{align*}
		x_{\tau(i)} &\mapsto \alpha_i x_i + \beta_i y_i,\\
		y_{\tau(i)} &\mapsto \gamma_i x_i + \xi_i y_i
	\end{align*}
	such that $\alpha_i+\gamma_i=\beta_i+\xi_i$. 
	The condition $\alpha_i+\gamma_i=\beta_i+\xi_i$ is equivalent to $\alpha_i+\beta_i=\gamma_i+\xi_i$, and we denote this sum with $c$.  
	We compute the action of $\sigma^*$ on $f$ explicity:
	\begin{align*}
		\sigma^*f 
			&= \sum_{i=0}^M a_{\tau(i)}(\alpha_i x_i + \beta_i y_i)
								(\gamma_i x_i + \xi_i y_i)= \sum_{i=0}^M a_{\tau(i)}(\alpha_i\xi_i+ \beta_i\gamma_i)x_iy_i + 
							a_{\tau(i)}\alpha_i\gamma_i x_i^2 + a_{\tau(i)}\beta_i\xi_i y_i^2.
	\end{align*}
	Since $\sigma^* f$ still lies in the pencil $\pencil{f,g}$, so the coefficients of $x_i^2$ and $y_i^2$ must agree, which means
	\[
		\alpha_i\gamma_i=\beta_i\xi_i
	\]
	as $a_{\tau(i)}\neq 0$. 
	Replacing $\gamma_i$ with $c-\xi_i$ and $\beta_i$ with $c-\alpha_i$ yields
	\[
		c\alpha_i-\alpha_i\xi_i=c\xi_i-\alpha_i\xi_i.
	\]
	If $c=0$, then $\alpha_i=\beta_i$, and hence $\gamma_i=\xi_i$. 
	This is impossible since $\sigma^*\pencil{x_{\tau(i)},y_{\tau(i)}}$ should have dimension $2$.
	So we get $c\neq 0$, then $\alpha_i=\xi_i$, and hence $\beta_i=\gamma_i$.
\end{proof}

\section{Study of $\Aut^0$}
Let $k$ be an algebraically closed field of characteristic $2$, and let $\pencil{f,g}$ be a pencil of quadratic forms on $\bP^{2M+1}_k$. 
Define $X:=V(f,g)\subset \bP^{2M+1}_k$. 
We assume throughout this section that $X$ is smooth and the Pfaffian of $\pencil{f,g}$ has distinct roots.
One should think of $X$ as the base locus of $\pencil{f,g}$. 
We study in this section the structure of the connected component of the identity of the automorphism scheme $\Aut_X$. 
The main result of this section is

\begin{thm}\label{thm:structure of Aut^0}
	The connected component of identity $\Aut^0_X$ in $\Aut_X$ is isomorphic to $\mu_2^M$, if $M\geq 2$. 
\end{thm}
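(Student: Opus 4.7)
The plan is to establish the isomorphism by proving both inclusions separately: first, I would exhibit an explicit closed embedding $\mu_2^M \hookrightarrow \Aut^0_X$, then classify all $R$-points of $\Aut^0_X$ for Artin local $R$ and show each one lies in the image, using the structural results of Section~2.

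For the forward embedding, given $(\epsilon_0, \dots, \epsilon_M) \in \mu_2^{M+1}(R)$, the diagonal matrix $\mathrm{diag}(\epsilon_0, \dots, \epsilon_M, \epsilon_0, \dots, \epsilon_M) \in \GL_{2M+2}(R)$ visibly fixes both $f$ and $g$, hence defines an element of $\Aut^{\bG_m}_{X, \cO_X(1)}(R)$. Quotienting by the diagonal $\bG_m$ yields a closed embedding $\mu_2^{M+1}/\Delta(\mu_2) \cong \mu_2^M \hookrightarrow \Aut_X$, which lands in $\Aut^0_X$ by connectedness of $\mu_2^M$.

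For the reverse inclusion, let $R$ be an Artin local $k$-algebra with residue field $k$ and take $\sigma \in \Aut^0_X(R)$. The pushforward $\varphi_*$ restricted to $\Aut^0_X$ factors through a connected subgroup of $\PGL_2$ which preserves the reduced $0$-cycle $\varphi(Z) = \{[a_0:b_0], \dots, [a_M:b_M]\}$ by Corollary~\ref{cor: image of singular locus under phi} and Proposition~\ref{prop: pushforawrd of Aut sends image of singular locus to itself}; by connectedness $\varphi_*(\sigma)$ fixes each of these $M+1 \geq 3$ distinct $R$-points of $\bP^1_R$, and the simply $3$-transitive action of $\PGL_2$ on $\bP^1$ forces $\varphi_*(\sigma) = 1$. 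Lifting $\sigma$ to $\tilde\sigma \in \Aut^{\bG_m,0}_{X,\cO_X(1)}(R) \subseteq \GL_{2M+2}(R)$ via the scheme-theoretic section of Proposition~\ref{prop: exact sequence of lifting group}, we obtain $\tilde\sigma^* f = c f$ and $\tilde\sigma^* g = c g$ for some $c \in R^\times$. Since $\tau = \mathrm{id}$ (forced by connectedness of $\Aut^0_X$), Proposition~\ref{prop:action of elements in pi_0 on subspace} gives $\tilde\sigma^* x_i = \alpha_i x_i + \beta_i y_i$ and $\tilde\sigma^* y_i = \beta_i x_i + \alpha_i y_i$. Expanding $\tilde\sigma^* f = cf$ and using $(\alpha_i + \beta_i)^2 = \alpha_i^2 + \beta_i^2$ in characteristic~$2$ yields the coefficient-wise identities $a_i \alpha_i \beta_i = 0$ and $a_i(\alpha_i + \beta_i)^2 = c a_i$ for each $i$, i.e.\ $\alpha_i \beta_i = 0$ and $(\alpha_i + \beta_i)^2 = c$.

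To conclude, observe that $\Aut^0_X(k) = \{1\}$ since $\Aut^0_X$ is a connected finite group scheme over the algebraically closed field $k$, so the reduction of $\tilde\sigma$ modulo $\fm_R$ is a scalar matrix; after rescaling by an element of $\bG_m(R)$ we may assume this reduction is the identity. Then each $\alpha_i$ is a unit in $R$, so $\alpha_i \beta_i = 0$ forces $\beta_i = 0$, whence $\alpha_i^2 = c$ and $\tilde\sigma = \mathrm{diag}(\alpha_0, \dots, \alpha_M, \alpha_0, \dots, \alpha_M)$; modulo a final rescaling by $\alpha_0$, the element $\sigma$ is represented by an element of $\mu_2^M(R)$. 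The critical input — and essentially the only place the hypothesis $M \geq 2$ enters — is the trivialization of $\varphi_*$ on $\Aut^0_X$ via $3$-transitivity; the remaining work is bookkeeping between $\GL_{2M+2}$ and $\PGL_{2M+2}$ together with a direct characteristic-$2$ calculation, and is expected to be routine.
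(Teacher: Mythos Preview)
Your proposal is correct and follows essentially the same route as the paper: trivialize $\varphi_*$ on $\Aut^0_X$ via connectedness and the fact that $\varphi(Z)$ contains $M+1\geq 3$ distinct points (this is the paper's Lemma~\ref{lem:pushforward of Aut^0 is trivial}), invoke Proposition~\ref{prop:action of elements in pi_0 on subspace} with $\tau=\mathrm{id}$ to obtain the block form, compare coefficients in $\tilde\sigma^*f=cf$ to get $\alpha_i\beta_i=0$ and $\alpha_i^2+\beta_i^2=c$, and use invertibility of $\alpha_i$ (from the reduction modulo $\fm_R$) to kill $\beta_i$. The only cosmetic difference is that you frame the argument as a two-sided inclusion and write down the forward embedding $\mu_2^M\hookrightarrow\Aut^0_X$ explicitly, whereas the paper simply computes $(\Aut^{\bG_m}_{X,\cO_X(1)})^0(A)$ directly and reads off the quotient.
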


Note that by Corollary \ref{cor:Aut is finite}, the connected component $\Aut^0_X$ is the spectrum of a local Artinian algebra, so to compute $\Aut^0_X$, it suffices to compute the $A$-valued points $\Aut^0_X(A)$ for all local Artinian $k$-algebras $A$.

\begin{lem}\label{lem:pushforward of Aut^0 is trivial}
	If $M\geq 2$, then $\varphi_*(\Aut^0_X)$ is trivial.
\end{lem}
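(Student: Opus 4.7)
The plan is to combine the fact that $\Aut^0_X$ is connected with the previously established rigidity of the pushforward action on the finite set $\varphi(Z)\subset\bP^1_k$.

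First I would note that, by Notation \ref{not: notation for the scalars} together with Corollary \ref{cor: image of singular locus under phi}, the image $\varphi(Z)$ consists of $M+1$ pairwise distinct reduced $k$-rational points $[a_0:b_0],\dots,[a_M:b_M]$ of $\bP^1_k$, and by Proposition \ref{prop: pushforawrd of Aut sends image of singular locus to itself} the subgroup scheme $\varphi_*(\Aut^0_X)\subset\PGL_2$ preserves this finite reduced closed subscheme. Since $\Aut^0_X$ is connected, so is $\varphi_*(\Aut^0_X)$. A connected group scheme cannot permute the connected components of $\varphi(Z)$ non-trivially, so for every local Artinian $k$-algebra $A$ and every $\zeta\in\varphi_*(\Aut^0_X)(A)$ the associated permutation $\tau\in S_{M+1}$ from Notation \ref{not: notation for the scalars} is the identity. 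In other words, $\zeta$ fixes each of the $M+1$ $k$-rational points $[a_i:b_i]$ scheme-theoretically.

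Next I would use that, for $M\geq 2$, we have at least three distinct $k$-rational points, and the stabilizer in $\PGL_2$ of three distinct $k$-points of $\bP^1_k$ is trivial as a group scheme. This is the standard $3$-transitivity argument: after a change of coordinates we may assume the three points are $[1:0],[0:1],[1:1]$; any $A$-lift of $\zeta$ to $\GL_2(A)$ preserving the first two must be diagonal $\diag(\lambda,\mu)$ with $\lambda,\mu\in A^{\times}$, and preserving the third forces $\lambda=\mu$, hence $\zeta=1$ in $\PGL_2(A)$. Thus $\varphi_*(\Aut^0_X)(A)$ is trivial for every local Artinian $k$-algebra $A$, which together with the finiteness of $\Aut_X$ (Corollary \ref{cor:Aut is finite}) gives the triviality of $\varphi_*(\Aut^0_X)$.

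I do not anticipate any serious obstacle: the only subtlety is the passage from the set-theoretic statement ``each closed point of $\varphi(Z)$ is fixed'' to the scheme-theoretic one used in the stabilizer argument, but since the $[a_i:b_i]$ are reduced $k$-points and $A$ is local, any $A$-point of $\PGL_2$ sending the reduced $k$-point $[a_i:b_i]$ into $\varphi(Z)_A$ must send it to the same reduced $k$-point $[a_i:b_i]$. The full computation of $\Aut^0_X\cong\mu_2^M$ in Theorem \ref{thm:structure of Aut^0} will then be carried out by examining the kernel of $\varphi_*$ using the explicit form of $\sigma^*$ provided by Proposition \ref{prop:action of elements in pi_0 on subspace}.
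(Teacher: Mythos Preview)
Your proposal is correct and follows essentially the same route as the paper: both argue that $\varphi(Z)$ consists of $M+1\geq 3$ distinct points, that the connected group $\varphi_*(\Aut^0_X)$ must fix each of them, and that the stabilizer in $\PGL_2$ of three distinct points is trivial. Your version spells out the Artinian-points and $3$-transitivity details more explicitly, but there is no substantive difference in strategy.
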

\begin{proof}
	By Proposition \ref{prop: pushforawrd of Aut sends image of singular locus to itself}, the pushforward $\varphi_*(\Aut^0_X)$ sends the image of the singular locus $\varphi(Z)$ of $X$ to itself. 
	Now wlog we take $\pencil{f,g}$ to be the normal form (Corollary \ref{cor:normal form of pencil}):
	\[
			f= \displaystyle\sum_{i=0}^M a_ix_iy_i,\ 
			g= \displaystyle\sum_{i=0}^M b_ix_iy_i+x_i^2+y_i^2,
 	\]
	where $\det \left[\begin{smallmatrix}
		a_i & a_j\\
		b_i & b_j
	\end{smallmatrix}\right] \neq 0$ for all $i\neq j$. 
	By Corollary \ref{cor: image of singular locus under phi}, the image $\varphi(Z)$ is the distinct $M+1$ points $\coprod_{i=0}^M [a_i:b_i]$. 
	Since $\varphi_*(\Aut_X^0)$ is a connected subgroup of $\PGL_2$, it operates trivially on $\varphi(Z)$, and hence $\varphi_*(\Aut_X^0) \subset \Stab_{\PGL_2}(\varphi(Z))$, where the latter is trivial for $M\geq 2$. 
\end{proof}

\begin{proof}[Proof of Theorem \ref{thm:structure of Aut^0}]
	Write $[x_0:\dots:x_M:y_0:\dots:y_M]$ for the coordinates in $\bP^{2M+1}_k$ and take $\pencil{f,g}$ in the normal form given in Corollary \ref{cor:normal form of pencil}: 
	\[
			f= \displaystyle\sum_{i=0}^M a_ix_iy_i,\ 
			g= \displaystyle\sum_{i=0}^M b_ix_iy_i+x_i^2+y_i^2,
 	\]
	where $a_i\neq 0$ for all $i$, and $\det \left[\begin{smallmatrix}
		a_i & a_j\\
		b_i & b_j
	\end{smallmatrix}\right] \neq 0$ for all $i\neq j$. 
	Let $(A,\fm)$ be a local Artinian $k$-algebra, and let $\sigma \in \left(\Aut^{\bG_m}_{X,\cO_X(1)}\right)^0(A) \subset \GL_{2M+2}(A)$ be an element of the lifting group of $\Aut^0_{X,\cO_X(1)}$. 
	By Proposition \ref{prop:action of elements in pi_0 on subspace} and Lemma \ref{lem:pushforward of Aut^0 is trivial}, we may write
	\begin{align*}
		\sigma^*(x_i) =\alpha_i x_i + \beta_i y_i,\\
		\sigma^*(y_i) =\beta_i x_i + \alpha_i y_i.
	\end{align*}
	Since the support of the image of $\Spec A \rightarrow \left(\Aut^{\bG_m}_{X,\cO_X(1)}\right)^0$ induced by $\sigma$ is the identity, we may assume $\alpha_i-1\in \fm, \beta_i \in \fm$ for all $i$. 
	In particular, $\alpha_i$ are invertible.

	We compute the action of $\sigma$ on $\pencil{f,g}$:
		\begin{align*}
	 		\sigma^*f &= \sum_{i=0}^M a_i (\alpha_ix_i+\beta_iy_i)(\beta_ix_i+\alpha_iy_i)\\
	 		&= \sum_{i=0}^M a_i(\alpha_i^2+\beta_i^2) x_iy_i + a_i\alpha_i\beta_i (x_i^2+y_i^2),\\
	 		\sigma^*g &= \sum_{i=0}^M b_i (\alpha_ix_i+\beta_iy_i)(\beta_ix_i+\alpha_iy_i)
	 				+ (\alpha_ix_i+\beta_iy_i)^2 + (\beta_ix_i+\alpha_iy_i)^2\\
	 		&= \sum_{i=0}^M b_i(\alpha_i^2+\beta_i^2) x_iy_i + (b_i\alpha_i\beta_i+\alpha_i^2+\beta_i^2) (x_i^2+y_i^2).
	 	\end{align*}
	By Lemma \ref{lem:pushforward of Aut^0 is trivial}, $\sigma$ acts on the pencil $\pencil{f,g}$ by a scalar, i.e. $\sigma^*f=cf, \sigma^*g=cg$ for some $c\in A^\times$. 
	By comparing coefficients we get the following equations:
	\[
		\alpha_i^2+\beta_i^2=c,\ \alpha_i\beta_i=0
	\]	
	for all $j$ and all $j\neq k$.
	Since $\alpha_i$ are invertible, we get $\beta_i=0$ and $\alpha_i^2=c$ for all $i$.
	This shows
	\[
		\left(\Aut^{\bG_m}_{X,\cO_X(1)}\right)^0(A)=\left\{ 
			\mathrm{diag}(\alpha_0,\dots, \alpha_M)\in \GL_{2M+2}(A)
			\ \vline\ 
			\alpha_{ii}^2=c \ \ \forall\ 0\leq i \leq M
		\right\}.
	\]
	Then by Proposition \ref{prop: exact sequence of lifting group}, $\Aut_{X,\cO_X(1)}(A)=\left(\Aut^{\bG_m}_{X,\cO_X(1)}\right)^0(A) /\bG_m(A)$ and one sees directly that $\Aut_{X,\cO_X(1)}=\mu_2^M$. 
	We conclude using Proposition \ref{prop:Aut and polarized Aut agree}.
\end{proof}

\section{Study of $\pi_0(\Aut_X)$}
Let $k$ be an algebraically closed field of characteristic $2$, and let $\pencil{f,g}$ be a pencil of quadratic forms on $\bP^{2M+1}_k$. 
Define $X:=V(f,g)\subset \bP^{2M+1}_k$. 
We assume that $X$ is smooth and the Pfaffian of $\pencil{f,g}$ has distinct roots throughout this section. 
We study in this section the structure of the reduced group scheme $(\Aut_X)_{\text{red}}=\pi_0(\Aut_X)$. 
Using the isomorphism $\Aut_X=\Aut^0_X\rtimes \pi_0(\Aut_X)$, we identify $\pi_0(\Aut_X)$ as a subgroup of $\Aut_X$. 
Note also $\pi_0(\Aut_X)=\underline{\Aut_X(k)}$.
Therefore, it suffices to consider automorphisms of $X$ only (but not $X_T$ for some base change along $T$).

By Lemma \ref{lem:pushforward of Aut^0 is trivial}, $\varphi_*(\Aut^0_X)$ is trivial, so $\varphi_*$ factors through $\Aut_X/\Aut^0_X=\pi_0(\Aut_X)$. We denote the factorization $\pi_0(\Aut_X) \rightarrow \PGL_2$ also by $\varphi_*$ by an abuse of notation. 
The main results of this section are

\begin{thm}\label{thm: exact sequence for pi_0}
	Let $\varphi$ be the morphism $\bP_k^{2M+1}\backslash X \rightarrow \bP_k^1$ defined by $\pencil{f,g}$, and let $Z$ be the singular locus of $\varphi$ defined in Proposition \ref{prop:locus of fitting ideal is singular locus}. 
	If $M\geq 2$, then there is an exact sequence of groups:
	\[\begin{tikzcd}
		1 & {(\bZ/2\bZ)^{M+1}} & {\pi_0(\Aut_X)} & {\Aut(\bP_k^1;\varphi(Z)),}
		\arrow[from=1-1, to=1-2]
		\arrow[from=1-2, to=1-3]
		\arrow["\varphi_*",from=1-3, to=1-4]
	\end{tikzcd}\]
	where $\Aut(\bP_k^1;\varphi(Z))$ is the equivariant automorphism group of $\varphi(Z)$ in $\bP_k^1$, that is, the subgroup of $\PGL_2(k)$ whose action on $\bP_k^1$ sends $\varphi(Z)$ to itself (but possibly permutes the points).
\end{thm}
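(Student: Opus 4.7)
The plan is to split the statement into two tasks: establishing well-definedness of the displayed map and identifying its kernel.

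For well-definedness, Lemma \ref{lem:pushforward of Aut^0 is trivial} says $\varphi_*(\Aut_X^0)$ is trivial, so the pushforward $\varphi_* : \Aut_X \to \PGL_2$ factors through $\pi_0(\Aut_X)$, and Proposition \ref{prop: pushforawrd of Aut sends image of singular locus to itself} puts the image inside $\Aut(\bP_k^1; \varphi(Z))$.

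For the kernel, I would take $\sigma \in \Aut_X(k)$ with $\varphi_*(\sigma) = 1$ and lift it (via the set-theoretic section of Proposition \ref{prop: exact sequence of lifting group}) to $\tilde\sigma \in \Aut^{\bG_m}_{X,\cO_X(1)}(k) \subset \GL_{2M+2}(k)$. The vanishing of $\varphi_*(\sigma)$ gives $\tilde\sigma^* f = c f$ and $\tilde\sigma^* g = c g$ for some $c \in k^\times$, and the permutation $\tau$ of Notation \ref{not: notation for the scalars} is the identity. Proposition \ref{prop:action of elements in pi_0 on subspace} then forces $\tilde\sigma^* x_i = \alpha_i x_i + \beta_i y_i$ and $\tilde\sigma^* y_i = \beta_i x_i + \alpha_i y_i$, and comparing coefficients in $\tilde\sigma^* f = c f$ and $\tilde\sigma^* g = c g$ exactly as in the proof of Theorem \ref{thm:structure of Aut^0} yields $\alpha_i \beta_i = 0$ and $\alpha_i^2 + \beta_i^2 = c$ for each $i$.

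In characteristic $2$ with $k$ algebraically closed, these two equations at the level of $k$-points force $\{\alpha_i, \beta_i\} = \{0, \sqrt{c}\}$, so each $\tilde\sigma$ is, up to the overall scalar $\sqrt{c}$, a block-diagonal matrix whose $2\times 2$ block on $\pencil{x_i, y_i}$ equals either $I_2$ or $\left[\begin{smallmatrix} 0 & 1 \\ 1 & 0 \end{smallmatrix}\right]$. Since $I_2$ and this swap are not proportional, the $2^{M+1}$ subsets $S \subset \{0, \ldots, M\}$ (recording the positions of the swap) descend to pairwise distinct classes in $\PGL_{2M+2}(k)$, and their composition law is symmetric difference, giving $\ker(\varphi_*) \cong (\bZ/2\bZ)^{M+1}$. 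Each such element visibly preserves both $f$ and $g$ in the normal form of Corollary \ref{cor:normal form of pencil}, hence really lies in $\Aut_X(k)$.

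The proof contains little that is conceptually new beyond the work already done in Sections 2 and 3 — the key observation, and the only point that is not bookkeeping, is that in contrast with the infinitesimal analysis of Theorem \ref{thm:structure of Aut^0}, where $\beta_i$ was forced into the maximal ideal and hence to vanish, over honest $k$-points the equation $\alpha_i \beta_i = 0$ allows either coordinate to be zero, producing the extra $(\bZ/2\bZ)$-factor per block. This is also where algebraic closure is essential, via the existence of $\sqrt{c}$.
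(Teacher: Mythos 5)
Your proof is correct and follows essentially the same route as the paper: both arguments reduce to a lift $\tilde\sigma\in\Aut^{\bG_m}_{X,\cO_X(1)}(k)$, invoke Proposition \ref{prop:action of elements in pi_0 on subspace} with $\tau=\mathrm{id}$ to put $\tilde\sigma$ in block form, and compare coefficients of $\tilde\sigma^*f$ to get $\alpha_i\beta_i=0$ and $\alpha_i^2+\beta_i^2=c$, forcing each block to be the identity or the swap. The only (immaterial) difference is how the scalar ambiguity is handled: the paper first identifies $\ker\varphi_*$ with the subgroup of lifts fixing $f$ and $g$ exactly via a snake-lemma diagram chase using the injectivity of $(-)^2$ on $k^\times$, whereas you carry the constant $c$ through and divide by $\sqrt{c}$ at the end.
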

\begin{note}
	Assume that $\pencil{f,g}$ is given by the normal form 
	\[
		f= \displaystyle\sum_{i=0}^M a_ix_iy_i,\ 
		g= \displaystyle\sum_{i=0}^M b_ix_iy_i+x_i^2+y_i^2,
 	\]
	where $a_i\neq 0$ for all $i$, and $\det \left[\begin{smallmatrix}
		a_i & a_j\\
		b_i & b_j
	\end{smallmatrix}\right] \neq 0$ for all $i\neq j$ by Corollary \ref{cor:normal form of pencil}. 
	Recall from Notation \ref{not: notation for the scalars} we may associate a permutation $\tau\in S_{M+1}$ to every $\zeta\in \Aut(\bP_k^1;\varphi(Z))$, by its action on $\varphi(Z)$.
	Choose a matrix representation $A$ of $\zeta$, which is equivalent to choose a preimage of $\zeta \in \PGL_2(k)$ under the quotient $\GL_2 \rightarrow \PGL_2$, there exist $\lambda_i\in k^\times$ such that
	\[
		A\cdot \begin{bmatrix}
			a_i\\
			b_i
		\end{bmatrix}
		= \lambda_i \cdot \begin{bmatrix}
			a_{\tau(i)}\\
			b_{\tau(i)}
		\end{bmatrix}.
	\]
\end{note}
\begin{thm}\label{thm:characterization of image of phi_*}
	The automorphism $\zeta$ is in the image of $\varphi_*$ if and only if the following two conditions hold:
	\begin{enumerate}
		\item $\det A= \frac{a_{\tau(i)}\lambda_i^2}{a_i}$ for all $0\leq i \leq M$,
		\item For each $0\leq j\leq M$, the value 
		\[
			\frac{a_{\tau(i)}(\lambda_i-\lambda_j)}{a_{\tau(i)}b_{\tau(j)}-a_{\tau(j)}b_{\tau(i)}}
		\]
		is independent of $i$, for $i\neq j$.
	\end{enumerate}
	 
\end{thm}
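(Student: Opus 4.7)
The plan is to translate the existence of a lift $\sigma \in \Aut^{\bG_m}_{X,\cO_X(1)}$ of $\zeta$ into algebraic conditions on $A$ and the $\lambda_i$ via an explicit coefficient comparison. By Proposition~\ref{prop:action of elements in pi_0 on subspace}, any such $\sigma$ must take the block-diagonal form $\sigma^*(x_{\tau(i)}) = \alpha_i x_i + \beta_i y_i$, $\sigma^*(y_{\tau(i)}) = \beta_i x_i + \alpha_i y_i$ for some $\alpha_i, \beta_i \in k$. Substituting this directly into $f$ and $g$ and applying the characteristic~$2$ identities
\[
(\alpha_i x_i + \beta_i y_i)(\beta_i x_i + \alpha_i y_i) = \alpha_i \beta_i(x_i^2 + y_i^2) + (\alpha_i^2 + \beta_i^2) x_i y_i
\]
together with $(\alpha_i x_i + \beta_i y_i)^2 + (\beta_i x_i + \alpha_i y_i)^2 = (\alpha_i^2 + \beta_i^2)(x_i^2 + y_i^2)$ presents $\sigma^* f$ and $\sigma^* g$ as explicit linear combinations of $x_i y_i$ and $x_i^2 + y_i^2$.

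Since $\sigma^*$ preserves $\pencil{f,g}$ and acts on it by $\mu A$ for some $\mu \in k^\times$ absorbing the $\GL_2 \to \PGL_2$ ambiguity, we next match coefficients in $\sigma^* f = \mu(pf + qg)$ and $\sigma^* g = \mu(rf + sg)$ monomial by monomial, exploiting the relation $pa_i + qb_i = \lambda_i a_{\tau(i)}$ coming from the definition of $\lambda_i$. For every $0 \leq i \leq M$, this produces the three relations
\[
\mu\lambda_i = \alpha_i^2 + \beta_i^2, \quad \mu q = a_{\tau(i)}\alpha_i\beta_i, \quad \mu(s - \lambda_i) = b_{\tau(i)}\alpha_i\beta_i.
\]
Eliminating $\alpha_i\beta_i$ between the last two relations collapses the system to the single scalar condition $\lambda_i a_{\tau(i)} = s a_{\tau(i)} - q b_{\tau(i)}$ for all $i$. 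From this, (1) follows by expanding $\det A = ps - qr$ after substituting $p = (\lambda_i a_{\tau(i)} - q b_i)/a_i$ and $r = (\lambda_i b_{\tau(i)} - s b_i)/a_i$ (the two mixed terms cancel), and (2) follows by subtracting the scalar condition at distinct indices $i, j$ and recognising the common value as $q/a_{\tau(j)}$.

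For the converse, assuming (1) and (2) we reconstruct $\sigma$. Condition (2) makes $\nu_j := a_{\tau(i)}(\lambda_i - \lambda_j)/(a_{\tau(i)} b_{\tau(j)} - a_{\tau(j)} b_{\tau(i)})$ well-defined independently of $i$, and the symmetry of the expression for $\nu_j a_{\tau(j)}$ in the indices $i, j$ forces it to be independent of $j$ as well. Combining this common value with (1) and the Cramer identity $\det A \cdot (a_i b_j - a_j b_i) = \lambda_i\lambda_j(a_{\tau(i)} b_{\tau(j)} - a_{\tau(j)} b_{\tau(i)})$ pins it down to the entry $q$ of $A$, so the scalar condition above holds. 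After rescaling $A$ so that $\mu = 1$, for each $i$ we solve $\alpha_i + \beta_i = \sqrt{\lambda_i}$ (valid since $k$ is perfect) together with $\alpha_i\beta_i = q/a_{\tau(i)}$ by taking $\alpha_i, \beta_i$ to be the two roots of $T^2 + \sqrt{\lambda_i}\,T + q/a_{\tau(i)}$ in $k$. The resulting block matrix is invertible since $\alpha_i + \beta_i = \sqrt{\lambda_i} \neq 0$ and by construction preserves $\pencil{f,g}$, hence $X$. The main obstacle is the sufficiency direction, specifically the verification that the common value produced by (2) coincides with the matrix entry $q$; this requires a careful joint use of (1) and the Cramer identity rather than a purely symmetric argument.
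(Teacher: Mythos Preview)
Your proposal is correct and follows the same strategy as the paper—apply the block form of Proposition~\ref{prop:action of elements in pi_0 on subspace} and compare coefficients—but with a different basis for the pencil: the paper works with the singular members $h_i = a_i g - b_i f$ and tracks auxiliary scalars $\mu_i$, whereas you compute $\sigma^* f$ and $\sigma^* g$ directly in terms of the matrix entries $p,q,r,s$ of $A$. Your bookkeeping is arguably cleaner, since the key relation $\lambda_i a_{\tau(i)} = s\,a_{\tau(i)} - q\,b_{\tau(i)}$ (your ``scalar condition'') appears immediately and its equivalence with condition~(1) is transparent once one writes the cofactor identity $s\,a_{\tau(i)} - q\,b_{\tau(i)} = (\det A)\,a_i/\lambda_i$.

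One point to tighten in the converse: the scalar condition follows from condition~(1) \emph{alone} via that cofactor identity, not from the identification of the common value $\nu_j a_{\tau(j)}$ with $q$; the latter only gives that the quantities $\lambda_i a_{\tau(i)} - s\,a_{\tau(i)} + q\,b_{\tau(i)}$ are proportional to $a_{\tau(i)}$, not that they vanish. With the scalar condition in hand, your construction $\alpha_i\beta_i = q/a_{\tau(i)}$, $\alpha_i + \beta_i = \sqrt{\lambda_i}$ gives $\sigma^* f = pf + qg$ automatically, and $\sigma^* g = rf + sg$ because the coefficient of $x_i^2 + y_i^2$ in $\sigma^* g$, namely $\lambda_i + q\,b_{\tau(i)}/a_{\tau(i)}$, equals $s$ for every $i$. (Incidentally, this argument shows that condition~(2) is already implied by condition~(1), a point the paper does not make explicit.)
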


\begin{rmk}
	We remark here that the values $\det A$, $\frac{a_{\tau(i)}\lambda_i^2}{a_i}$ and $\frac{a_{\tau(i)}(\lambda_i-\lambda_j)}{a_{\tau(i)}b_{\tau(j)}-a_{\tau(j)}b_{\tau(i)}}$ are dependent of the choice of a matrix representative of $A$, but the two conditions are independent of the choice of the representative. 
	However, they indeed depend on the choice of the representative of the points $[a_i:b_i]$. 
	The values $a_i$ and $b_i$ here should be considered as given canonically by the normal form, and cannot be replaced by scalars.
\end{rmk}

\begin{proof}[Proof of Theorem \ref{thm: exact sequence for pi_0}]
	By Proposition \ref{prop: pushforawrd of Aut sends image of singular locus to itself}, the pushforward $\varphi_*(\Aut_X)$ sends $\varphi(Z)$ of $X$ to itself, so the image of $\pi_0(\Aut_X)$ under $\varphi_*$ lies in $\Aut(\bP_k^1;\varphi(Z))$. 
	
	We still need to prove that $\ker \varphi_*=(\bZ/2\bZ)^{M+1}$. 
	Consider the following diagram with exact rows:
	\[\begin{tikzcd}
		1 & 1 & G & {\ker \varphi_*} & 1 \\
		1 & {k^*} & {\Aut_{X,\cO_X(1)}^{\bG_m}(k)} & {\Aut_X(k)} & 1 \\
		1 & {k^*} & {\GL_2(k)} & {\PGL_2(k)} & 1,
		\arrow[from=1-1, to=1-2]
		\arrow[from=1-2, to=1-3]
		\arrow[from=1-2, to=2-2]
		\arrow[from=1-3, to=1-4]
		\arrow[from=1-3, to=2-3]
		\arrow[from=1-4, to=1-5]
		\arrow[from=1-4, to=2-4]
		\arrow[from=2-1, to=2-2]
		\arrow[from=2-2, to=2-3]
		\arrow["{(-)^2}", from=2-2, to=3-2]
		\arrow[from=2-3, to=2-4]
		\arrow["{\varphi_*}", from=2-3, to=3-3]
		\arrow[from=2-4, to=2-5]
		\arrow["{\varphi_*}", from=2-4, to=3-4]
		\arrow[from=3-1, to=3-2]
		\arrow[from=3-2, to=3-3]
		\arrow[from=3-3, to=3-4]
		\arrow[from=3-4, to=3-5]
	\end{tikzcd}\]
	where the groups in the top row are taken as the kernel of the vertical arrows below. 
	By a diagram chase similar to the proof of snake lemma, it can be easily shown that the top row is exact. 
	Hence it suffices to show that $G=(\bZ/2\bZ)^{M+1}$. 
	Pick any $\sigma \in G$. 
	By Proposition \ref{prop:action of elements in pi_0 on subspace}, any action $\sigma^*$ on $H^0(\bP^{2M+1},\cO(1))$ induced by $\sigma\in G$ has the form
	\begin{align*}
		x_i &\mapsto \alpha_i x_i + \beta_i y_i,\\
		y_i &\mapsto \beta_i x_i + \alpha_i y_i.
	\end{align*}
	We compute the action of $\sigma$ on $f$ explicitly:
	\begin{align*}
		\sigma^*f 
			= \sum_{i=0}^M a_i(\alpha_i x_i + \beta_i y_i)
								(\beta_i x_i + \alpha_i y_i)
		= \sum_{i=0}^M a_i(\alpha_i^2+ \beta_i^2)x_iy_i + 
							a_i\alpha_i\beta_i x_i^2 + a_i\alpha_i\beta_i y_i^2.
	\end{align*}
	 If $\sigma\in G$, then the induced action of $\sigma^*$ on the linear system $\pencil{f,g}\subset H^0(\bP^{2M+1}_k,\cO(2))$ is trivial, so $\sigma^*f=f$. 
	 Comparing coefficients, we get
	 \[
	 	\alpha_i^2+\beta_i^2=1 \ ,\ \alpha_i\beta_i=0
	 \]
	 for all $0\leq i \leq M$. 
	 We see then that the only two possibilities are
	 \[\begin{array}{cc}
	 	\begin{cases}
	 		\alpha_i=1\\
	 		\beta_i=0
	 	\end{cases}, &
	 	\begin{cases}
	 		\alpha_i=0\\
	 		\beta_i=1
	 	\end{cases}.
	 \end{array}\]
	 Therefore, $\sigma^*$ either acts trivially on $\pencil{x_i,y_i}$ or swaps $x_i$ and $y_i$, hence $G\cong (\bZ/2\bZ)^{M+1}$.
\end{proof}

\begin{proof}[Proof of Theorem \ref{thm:characterization of image of phi_*}]
	Let $\zeta$ be an automorphism in $\Aut(\bP^1_k ; \varphi(Z))$. 
	Let $\tau\in S_{M+1}$ be the permutation associated to $\zeta$ according to Notation \ref{not: notation for the scalars}. 
	Consider the following diagram with exact rows: 
	\[\begin{tikzcd}
		1 & {k^*} & {\Aut_{X,\cO_X(1)}^{\bG_m}(k)} & {\Aut_X(k)} & 1 \\
		1 & {k^*} & {\GL_2(k)} & {\PGL_2(k)} & 1.
		\arrow[from=1-1, to=1-2]
		\arrow[from=1-2, to=1-3]
		\arrow["{(-)^2}", from=1-2, to=2-2]
		\arrow[from=1-3, to=1-4]
		\arrow["{\varphi_*}", from=1-3, to=2-3]
		\arrow[from=1-4, to=1-5]
		\arrow["{\varphi_*}", from=1-4, to=2-4]
		\arrow[from=2-1, to=2-2]
		\arrow[from=2-2, to=2-3]
		\arrow[from=2-3, to=2-4]
		\arrow[from=2-4, to=2-5]
	\end{tikzcd}\]
	We prove first the forward implication. 
	Let $\sigma_0 \in \Aut_X(k)$ be a preimage of $\zeta$, and let $\sigma \in \Aut_{X,\cO_X(1)}^{\bG_m}(k)$ be a lift of $\sigma_0$. 
	By Proposition \ref{prop:action of elements in pi_0 on subspace}, the induced action $\sigma^*$ on $H^0(\bP^{2M+1}_k, \cO(1))$ has the following form:
	\begin{align*}
		x_{\tau(i)} &\mapsto \alpha_i x_i + \beta_i y_i,\\
		y_{\tau(i)} &\mapsto \beta_i x_i + \alpha_i y_i.
	\end{align*}
	By Corollary \ref{cor: image of singular locus under phi}, the singular elements of the pencil $\pencil{f,g}$ are the fibres of $\varphi_{\pencil{f,g}}$ over each $[a_i:b_i]$. We denote the singular elements in $\pencil{f,g}$ corresponding to $[a_i:b_i]$ with $h_i$. Concretely, we have
	\[
		h_i=a_ig-b_if=\sum_{j=0}^M(a_ib_j-a_jb_i) x_jy_j + a_ix_j^2+a_iy_j^2.
	\]
	Since $\varphi_*(\sigma)$ sends $[a_i:b_i]$ to $[a_{\tau(i)}:b_{\tau(i)}]$, we must have $\sigma^*(h_{\tau(i)})=\mu_i h_i$ for some scalar $\mu_i\neq 0$.
	On the other side, we can compute the action of $\sigma^*$ on $h_{\tau(i)}$ explicitly:
	\begin{align*}
		\sigma^*(h_{\tau(i)}) &=
		\sum_{\tau(j)=0}^M(a_{\tau(i)}b_{\tau(j)}-a_{\tau(j)}b_{\tau(i)})(\alpha_j x_j+\beta_j y_j)(\beta_j x_j+\alpha_j y_j)
		+ a_{\tau(i)}(\alpha_j^2 x_j^2+\beta_j^2 y_j^2)+
		a_{\tau(i)}(\beta_j^2 x_j^2+\alpha_j^2 x_j^2)\\
		&=\sum_{\tau(j)=0}^M(a_{\tau(i)}b_{\tau(j)}-a_{\tau(j)}b_{\tau(i)})(\alpha_j^2+\beta_j^2)x_jy_j +
		\left((a_{\tau(i)}b_{\tau(j)}-a_{\tau(j)}b_{\tau(i)})\alpha_j\beta_j + a_{\tau(i)}(\alpha_j^2+\beta_j^2)\right)x_j^2 
		\\&+ 
		\left((a_{\tau(i)}b_{\tau(j)}-a_{\tau(j)}b_{\tau(i)})\alpha_j\beta_j + a_{\tau(i)}(\alpha_j^2+\beta_j^2)\right)y_j^2.
	\end{align*}
	Comparing the coefficient of $x_i^2$ with the one in $\mu_ih_i$, we get
	\[
		a_{\tau(i)}(\alpha_i^2+\beta_i^2)=\mu_i a_i.
	\]
	Comparing the coefficients of $x_jy_j$ with the ones in $\mu_ih_i$, we get
	\begin{align*}
		(a_{\tau(i)}b_{\tau(j)}-a_{\tau(j)}b_{\tau(i)})(\alpha_j^2+\beta_j^2)
		&=\mu_i(a_ib_j-a_jb_i).
	\end{align*}
	Let $A$ be the matrix form of $\varphi_*(\sigma) \in \GL_2$ with basis $f,g$, and define $\lambda_i$ as in Notation \ref{not: notation for the scalars}.
	We note that
	\[
		A\cdot \begin{bmatrix}
			a_i & a_j\\
			b_i & b_j
		\end{bmatrix}
		=\begin{bmatrix}
			\lambda_ia_{\tau(i)} & \lambda_ja_{\tau(j)}\\
			\lambda_ib_{\tau(i)} & \lambda_jb_{\tau(j)}
		\end{bmatrix},
	\]
	which implies $\det A \cdot (a_ib_j-a_jb_i)=\lambda_i\lambda_j (a_{\tau(i)}b_{\tau(j)}-a_{\tau(j)}b_{\tau(i)})$.
	So the equation above reads also
	\[
		\frac{\alpha_j^2+\beta_j^2}{\lambda_j}=\frac{\lambda_i \mu_i}{\det A}.
	\]
	Since $\frac{\alpha_j^2+\beta_j^2}{\lambda_j}$ is independent of the choice of the index $i$, we must have $\frac{\lambda_i \mu_i}{\det A}$ is independent of $i$. 
	We denote this constant by $c$
	In particular, we have also
	\[
		\alpha_i^2+\beta_i^2=c\lambda_i=\frac{\mu_i\lambda_i^2}{\det A}.
	\]
	Combining with $a_{\tau(i)}(\alpha_i^2+\beta_i^2)=\mu_i a_i$, we get
	\[
		\det A= \frac{a_{\tau(i)}\lambda_i^2}{a_i}.
	\]
	This shows the first condition. 
	For the second condition, we compare the coefficients of $x_j^2$ in $\sigma^*(h_{\tau(i)})$ and $\mu_i h_i$, which reads
	\[
		(a_{\tau(i)}b_{\tau(j)}-a_{\tau(j)}b_{\tau(i)})\alpha_j\beta_j + a_{\tau(i)}(\alpha_j^2+\beta_j^2)=\mu_i a_i.
	\]
	Replacing $\alpha_j^2+\beta_j^2$ by $\frac{\lambda_i\lambda_j\mu_i}{\det A}$, and $a_i$ by $\frac{a_{\tau(i)}\lambda_i^2}{\det A}$, we obtain
	\[
		(a_{\tau(i)}b_{\tau(j)}-a_{\tau(j)}b_{\tau(i)})\alpha_j\beta_j=
		\frac{\mu_ia_{\tau(i)}\lambda_i^2-\mu_ia_{\tau(i)}\lambda_i\lambda_j}{\det A}.
	\]
	Replacing $\frac{\mu_i\lambda_i}{\det A}$ by $c$, we see
	\[
		\alpha_j\beta_j=c\cdot \frac{a_{\tau(i)}(\lambda_i-\lambda_j)}{a_{\tau(i)}b_{\tau(j)}-a_{\tau(j)}b_{\tau(i)}}.
	\]
	Hence the right hand side is independent of $i$, showing the second condition.
	
	For the backward implication, we pick a matrix representative $A$ of $\zeta$ (or equivalently a preimage of $\zeta$ under $\GL_2 \rightarrow \PGL_2$), and $\lambda_i\in k^\times$ such that for each $i$,
	\[
		A\cdot \begin{bmatrix}
			a_i\\
			b_i
		\end{bmatrix}
		= \lambda_i \cdot \begin{bmatrix}
			a_{\tau(i)}\\
			b_{\tau(i)}
		\end{bmatrix}
	\]
	for some scalar $\lambda_i\neq 0$. 
	Let $\alpha_j,\beta_j$ be a solution of the following equation systems
	\[\begin{cases}
		\alpha_j+\beta_j &= \sqrt{\lambda_j},\\
		\alpha_j\beta_j  &= \frac{a_{\tau(i)}(\lambda_i-\lambda_j)}{a_{\tau(i)}b_{\tau(j)}-a_{\tau(j)}b_{\tau(i)}}.
	\end{cases}\]
	Consider the automorphism $\sigma\in\Aut_{X,\cO_X(1)}^{\bG_m}(k)$ defined by
	\begin{align*}
		x_{\tau(i)}&\mapsto \alpha_i x_i+\beta_i y_i,\\
		y_{\tau(i)}&\mapsto \beta_i x_i+\alpha_i y_i.
	\end{align*}
	We may compute its action on $h_{\tau(i)}$:
	\begin{align*}
		\sigma^*(h_{\tau(i)})&=
		\sum_{\tau(j)=0}^M(a_{\tau(i)}b_{\tau(j)}-a_{\tau(j)}b_{\tau(i)})\lambda_j x_jy_j + a_{\tau(i)}\lambda_i \cdot (x_j^2+y_j^2)\\
		&=\sum_{\tau(j)=0}^M \frac{\det A \cdot (a_ib_j-a_jb_i)}{\lambda_i} x_jy_j + a_{\tau(i)}\lambda_i \cdot (x_j^2+y_j^2) =\frac{a_{\tau(i)}\lambda_i}{a_i} \cdot h_i
	\end{align*}
	where the second equality is due to the fact $\det A \cdot (a_ib_j-a_jb_i)=\lambda_i\lambda_j (a_{\tau(i)}b_{\tau(j)}-a_{\tau(j)}b_{\tau(i)})$ proven in the forward implication. 
	In particular, $\sigma$ fixes the pencil $\pencil{f,g}$ since any two $h_i,h_j$ build a basis of $\pencil{f,g}$, and its pushforward $\varphi_*(\sigma)$ sends $[a_i: b_i]$ to $[a_{\tau(i)}: b_{\tau(i)}]$. 
	So $\varphi_*(\sigma)$ agrees with $A$ up to a scalar as $\PGL_2$ is $3$-transitive.
\end{proof}

\begin{cor}\label{cor: pi_0 for a generic X}
	For a generic smooth $(2,2)$-complete intersection $X$, we have $\pi_0(\Aut_X) \cong (\bZ/2\bZ)^{M+1}$.
\end{cor}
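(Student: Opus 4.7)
The plan is to invoke the exact sequence of Theorem \ref{thm: exact sequence for pi_0}, namely
\[
1 \to (\bZ/2\bZ)^{M+1} \to \pi_0(\Aut_X) \xrightarrow{\varphi_*} \Aut(\bP^1_k;\varphi(Z)),
\]
which immediately reduces the corollary to the claim that the image of $\varphi_*$ is trivial for a generic $X$. By Corollary \ref{cor:normal form of pencil}, a generic smooth $(2,2)$-complete intersection whose Pfaffian has distinct roots is obtained from generic parameters $(a_i,b_i)$ in the normal form, and by Corollary \ref{cor: image of singular locus under phi}, the set $\varphi(Z)$ is precisely the configuration of $M+1$ distinct points $\{[a_i:b_i]\}_{i=0}^M$ in $\bP^1_k$.

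The main argument applies when $M\geq 3$, so that $\varphi(Z)$ consists of $M+1\geq 4$ points. The moduli of unordered $n$-tuples of distinct points in $\bP^1_k$ modulo $\PGL_2$ is a positive-dimensional coarse moduli space when $n\geq 4$, and the locus of configurations whose setwise $\PGL_2$-stabilizer is non-trivial is a proper closed subvariety (for instance, because the locus of tuples stabilized by a single fixed nontrivial element of $\PGL_2$ is lower-dimensional, and $\PGL_2$ has only finitely many finite subgroup types occurring as stabilizers of finite point sets up to conjugation). Hence for a generic $X$, the group $\Aut(\bP^1_k;\varphi(Z))$ is already trivial, and the exact sequence forces $\pi_0(\Aut_X)\cong(\bZ/2\bZ)^{M+1}$.

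The case $M=2$ requires separate treatment: since $\PGL_2$ acts $3$-transitively, $\Aut(\bP^1_k;\varphi(Z))\cong S_3$ for \emph{any} configuration of three distinct points, so one cannot hope to trivialize the target of $\varphi_*$. Instead, one must argue directly that the image of $\varphi_*$ is trivial by invoking the liftability criterion of Theorem \ref{thm:characterization of image of phi_*}: for each non-trivial $\tau\in S_3$, the associated $\lambda_i$ are determined up to a common scalar, and conditions (1) and (2) translate into non-trivial polynomial constraints on $(a_i,b_i)$. One then shows that for each $\tau\neq \mathrm{id}$, the locus in the moduli where these constraints are satisfied is a proper closed subset; taking the union over the finite set of non-trivial $\tau$, the complement is open dense.

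I expect the main obstacle to be the $M=2$ case: verifying that conditions (1) and (2) of Theorem \ref{thm:characterization of image of phi_*} impose genuinely non-trivial polynomial relations for every non-trivial $\tau \in S_3$ requires an explicit calculation with the $\lambda_i$'s (up to the common rescaling) and careful bookkeeping to ensure that the constraints cannot be simultaneously eliminated by parameter symmetries of the normal form. For $M\geq 3$, by contrast, the argument is essentially a dimension count on the $\PGL_2$-moduli of points in $\bP^1_k$ and requires no analysis of the explicit liftability conditions.
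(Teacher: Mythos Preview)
The paper states this corollary without proof; the only hint is the sentence in the introduction asserting that $\Aut(\bP^1_k;\varphi(Z))$ is trivial for generic $X$, after which the result follows from the exact sequence of Theorem~\ref{thm: exact sequence for pi_0}. Your plan agrees with this for $M\geq 3$, where a dimension count on configurations of $\geq 4$ points in $\bP^1_k$ indeed gives generic triviality of $\Aut(\bP^1_k;\varphi(Z))$.

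You are right to isolate $M=2$ as requiring a separate argument: there $\varphi(Z)$ consists of three points and $\Aut(\bP^1_k;\varphi(Z))\cong S_3$ for \emph{every} such $X$, so the introduction's sketch does not apply literally. Your proposed fix---showing that the \emph{image} of $\varphi_*$ is generically trivial by checking that the liftability conditions of Theorem~\ref{thm:characterization of image of phi_*} cut out a proper closed locus for each nontrivial $\tau\in S_3$---is the correct route, and condition~(1) alone already suffices for this. For instance, for a transposition $\tau=(ij)$ one finds that condition~(1) forces $a_i\lambda_j=a_j\lambda_i$, which is a nontrivial algebraic relation among the normal-form parameters; the $3$-cycles are handled similarly. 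So your proposal is sound and, in the $M=2$ case, fills in a step that the paper leaves implicit.
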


\begin{rmk}
	It is interesting to compare Theorem \ref{thm:characterization of image of phi_*} with the case where the complete intersection is even-dimensional or the cases of other characteristics.  
	Indeed, it is well known for characteristic $\neq 2$ that $\varphi_*$ is surjective, and the surjectivity for the case of even dimension in characteristic $2$ is shown by \cite[Theorem 1.6]{dolgachev2018regular}. 
	In our case, the two conditions appear to be very restrictive. 
	The following first example gives a non-trivial liftable element in $\Aut(\bP_k^1 ; \varphi(Z))$ with respect to a certain pencil, and the second example shows that $\varphi_*$ is not always surjective.
\end{rmk}

\begin{ex}\label{ex: liftable element in PGL_2}
	Let $a$ be a primitive $(M+1)$-th root of unity. We consider the pencil
	\begin{align*}
		f=\sum_{i=0}^M a^i \cdot x_iy_i,\ 
		g=\sum_{i=0}^M x_iy_i+x_i^2+y_i^2
	\end{align*}
	in $H^0(\bP^{2M+1}_k, \cO(2))$. The singular elements in the pencil are the fibres over $[a^i: 1]$ for $0\leq a \leq M-1$. 
	We take $A=\begin{bmatrix} a & 0\\0 & 1\end{bmatrix}$, and we see that
	\[
		\begin{bmatrix}
			a & 0\\
			0 & 1
		\end{bmatrix}\cdot
		\begin{bmatrix}
			a^i\\ 1
		\end{bmatrix}=
		\begin{bmatrix}
			a^{i+1}\\ 1
		\end{bmatrix},
	\]
	so $\lambda_i=1$ for all $i$. 
	This configuration satisfies the conditions in Theorem \ref{thm:characterization of image of phi_*}, and $A$ can be lifted to the automorphism
	\begin{alignat*}{3}
		x_0 &\mapsto x_M,\ &y_0 &\mapsto y_M, \\
		x_i &\mapsto x_{i-1},\ &y_i &\mapsto y_{i-1},\ \forall 1\leq i \leq M.
	\end{alignat*}
\end{ex}

\begin{ex}\label{ex: non-liftable element in PGL_2}
	Let $a$ be a root of $x^2+x+1$ in $k$. 
	We consider the pencil of quadratic forms
	\begin{alignat*}{3}
		f &= &x_0y_0+&x_1y_1+x_2y_2+ax_3y_3,\\
		g &= &		&x_1y_1+ax_2y_2+x_3y_3+x_0^2+x_1^2+x_2^2+x_3^2+y_0^2+y_1^2+y_2^2+y_3^2
	\end{alignat*}
	in $H^0(\bP_k^7,\cO(2))$. 
	The singular elements in the pencil are the fibres over $[1:0],[1:1],[1:a]$ and $[a:1]$ along $\varphi_{\pencil{f,g}}$. 
	We take $A=\begin{bmatrix} 1 & 0\\0 & a\end{bmatrix}\in \GL_2(k)$ and compute
	\[
		\begin{bmatrix}
			1 & 0\\
			0 & a
		\end{bmatrix}\cdot
		\begin{bmatrix}
			1\\ 0
		\end{bmatrix}=
		\begin{bmatrix}
			1\\ 0
		\end{bmatrix}\ ,\ 
		\begin{bmatrix}
			1 & 0\\
			0 & a
		\end{bmatrix}\cdot
		\begin{bmatrix}
			1\\ 1
		\end{bmatrix}=
		\begin{bmatrix}
			1\\ a
		\end{bmatrix}\ ,\ 
		\begin{bmatrix}
			1 & 0\\
			0 & a
		\end{bmatrix}\cdot
		\begin{bmatrix}
			1\\ a
		\end{bmatrix}= (1+a)\cdot
		\begin{bmatrix}
			a\\ 1
		\end{bmatrix}\ ,\ 
		\begin{bmatrix}
			1 & 0\\
			0 & a
		\end{bmatrix}\cdot
		\begin{bmatrix}
			a\\ 1
		\end{bmatrix}=a\cdot
		\begin{bmatrix}
			1\\ 1
		\end{bmatrix}.
	\]
	So $\lambda_0=\lambda_1=1,\lambda_2=1+a,\lambda_3=a$ following Notation \ref{not: notation for the scalars}. And
	\[
		\det A =a \neq 1= \frac{a_{\tau(0)}\lambda_0^2}{a_0}.
	\]
	By Theorem \ref{thm:characterization of image of phi_*}, the matrix $\begin{bmatrix} 1 & 0\\0 & a\end{bmatrix}$ cannot be lifted to an automorphism of $V(f,g)$. 
\end{ex}

\bibliographystyle{alpha}
\bibliography{biblio}

\end{document}